\def\imult{\lrcorner}
\def\dbar{\bar\partial}
\def\C{{\mathbb C}}
\def\Cn{\C^n}
\def\PM{{\mathcal{PM}}}
\def\Hom{{\rm Hom\,}}
\def\codim{{\rm codim\,}}
\def\Im{{\rm Im\, }}
\def\Ker{{\rm Ker\,  }}
\def\Ok{{\mathcal O}}
\def\Ow{{\tilde{\Ok}}}
\def\Re{{\rm Re\,  }}
\def\reg{{{\rm reg}}}
\def\sing{{{\rm sing}}}
\DeclareMathOperator{\Id}{Id}
\DeclareMathOperator{\supp}{supp}
\DeclareMathOperator{\ann}{ann}
\DeclareMathOperator{\rank}{rank}
\DeclareMathOperator{\depth}{depth}
\DeclareMathOperator{\hd}{dh}
\DeclareMathOperator{\Tor}{Tor}
\newtheorem{thm}{Theorem}
\newtheorem{lma}[thm]{Lemma}
\newtheorem{cor}[thm]{Corollary}
\newtheorem{prop}[thm]{Proposition}
\theoremstyle{definition}
\newtheorem{df}[thm]{Definition}
\theoremstyle{remark}
\newtheorem{preremark}[thm]{Remark}
\newtheorem{preex}[thm]{Example}
\newenvironment{remark}{\begin{preremark}}{\end{preremark}}
\newenvironment{ex}{\begin{preex}}{\end{preex}}
\begin{document}

\title{On the duality theorem on an analytic variety}

\date{\today}

\author{Richard L\"ark\"ang}

\address{Richard L\"ark\"ang\\ Department of
  Mathematics\\Chalmers University of Technology and the University of
  Gothenburg\\412 96 G\"oteborg\\Sweden}

\email{larkang@chalmers.se}

\subjclass{32A27, 32C30}

\maketitle

\begin{abstract}
    The duality theorem for Coleff-Herrera products on a complex manifold says that
    if $f = (f_1,\dots,f_p)$ defines a complete intersection, then the 
    annihilator of the Coleff-Herrera product $\mu^f$ equals (locally) the
    ideal generated by $f$. This does not hold unrestrictedly on an analytic variety $Z$.
    We give necessary, and in many cases sufficient conditions for when the
    duality theorem holds. These conditions are related to how the zero set of
    $f$ intersects certain singularity subvarieties of the sheaf $\Ok_Z$.
\end{abstract}

\section{Introduction}

    Let $f = (f_1,\dots,f_p)$ be a tuple of holomorphic functions on an analytic variety $Z$,
    where we throughout the article will assume that $Z$ has pure dimension.
    The \emph{Coleff-Herrera product} of $f$, as introduced in \cite{CH}, can be defined by
    \begin{equation} \label{eqchpdef}
        \mu^f = \dbar \frac{1}{f_p}\wedge \dots \wedge \dbar \frac{1}{f_1} . \varphi :=
        \left.\int_{Z} \frac{\dbar |f_p|^{2\lambda}\wedge \dots \wedge \dbar |f_1|^{2\lambda}}{f_p\dots f_1} \wedge \varphi \right|_{\lambda = 0}.
    \end{equation}
    Here, $\varphi$ is a test form, and the integral on the right-hand side is analytic in $\lambda$ for
    $\Re \lambda \gg 0$, and has an analytic continuation to $\lambda = 0$, and $|_{\lambda = 0}$ denotes this value.
    We denote the Coleff-Herrera product of $f$ either by $\mu^f$, or by $\dbar (1/f_1)\wedge\dots\wedge\dbar(1/f_p)$.
    The definition \eqref{eqchpdef} is different from the original one, but in the case we focus on here, that
    $f$ defines a \emph{complete intersection}, i.e., that $\codim Z_f = p$, various different definitions including this
    definition and the original definition by Coleff and Herrera coincide, also on a singular variety, see \cite{LS}.
   
    If $f$ defines a complete intersection, the \emph{duality theorem}, proven by Dickenstein and Sessa, \cite{DS},
    and Passare, \cite{P}, gives a close relation between the Coleff-Herrera product of $f$ and the ideal $\mathcal{J}(f_1,\dots,f_p)$
    generated by $f$. This is done by means of the annihilator, $\ann \mu^f$, of $\mu^f$, i.e., the holomorphic
    functions $g$ such that $g \mu^f = 0$.
    \begin{thm}
        Let $f = (f_1,\dots,f_p)$ be a holomorphic mapping on a complex manifold defining a complete intersection.
        Then locally, $\mathcal{J}(f_1,\dots,f_p) = \ann \mu^f$.
    \end{thm}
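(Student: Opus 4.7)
The plan is to prove the two inclusions separately, noting that one is essentially formal while the other is the content of the theorem.

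The inclusion $\mathcal{J}(f_1,\dots,f_p) \subseteq \ann \mu^f$ is the easier direction: it suffices to verify $f_j \cdot \mu^f = 0$ for each generator. Starting from the definition \eqref{eqchpdef}, one multiplies the integrand by $f_j$, which cancels one factor in the denominator and leaves $\dbar |f_j|^{2\lambda}$ acting against the remaining factors. Since the complete intersection assumption guarantees that the $\dbar(1/f_j)$ anticommute (after the analytic continuation), one may arrange for this to be the innermost factor without changing the current up to sign, so the manipulation is justified. Taking the value at $\lambda = 0$ then gives $0$, since formally $\dbar|f_j|^{2\lambda}|_{\lambda=0}=\dbar 1 = 0$ (more precisely, the analytic continuation of the remaining integrand is holomorphic in $\lambda$ at $0$ and is hit by $\dbar$ applied to the constant $1$).

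For the nontrivial direction $\ann \mu^f \subseteq \mathcal{J}(f)$, my plan is to work locally and bridge from the analytic statement $g\mu^f=0$ to an algebraic statement about membership in the ideal. Since we are on a complex manifold and $f$ is a regular sequence, the Koszul complex $K_\bullet(f;\Ok)$ is a free resolution of $\Ok/\mathcal{J}(f)$. I would then use this resolution to identify the space of $\dbar$-closed currents of bidegree $(0,p)$ that are supported on $Z_f$ and annihilated by $\mathcal{J}(f)$ with the algebraic local cohomology module $H^p_{Z_f}(\Ok)$, computed via the Koszul/\v{C}ech complex as
\[
H^p_{Z_f}(\Ok) \;\cong\; \Ok_{f_1\cdots f_p}\Big/\sum_{j=1}^p \Ok_{f_1\cdots\widehat{f_j}\cdots f_p}.
\]
Under this identification, $\mu^f$ should correspond to the class of $1/(f_1\cdots f_p)$, and a direct \v{C}ech-level computation shows that the annihilator of this class is exactly $\mathcal{J}(f)$: if $g/(f_1\cdots f_p)$ lies in $\sum_j \Ok_{\hat f_j}$, then clearing denominators and using that $f$ is a regular sequence forces $g\in\mathcal{J}(f)$.

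The main obstacle is setting up the correspondence between Coleff–Herrera currents and local cohomology, i.e.\ showing that every $\dbar$-closed $(0,p)$-current supported on $Z_f$ and annihilated by $\mathcal{J}(f)$ is, up to an $\Ok$-module factor, of the form $h\mu^f$ for some holomorphic $h$, and that this identification intertwines the current $\mu^f$ with the canonical generator on the algebraic side. A more hands-on alternative would be to use Hironaka's resolution of singularities to pull back to a chart where $\pi^*f_j$ are monomials times units; the transformation law for $\mu^f$ then lets one compute $\pi^*(g\mu^f)=0$ explicitly and extract divisibility conditions on $\pi^*g$ that, after pushing down by properness and Grauert's direct image theorem, produce $g\in\mathcal{J}(f)$. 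Either route, the nontrivial point is precisely to translate the vanishing of a current into an algebraic membership; once that translation is in place, the remaining step is the purely algebraic annihilator computation for a regular sequence.
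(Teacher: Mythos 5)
The paper does not actually prove this theorem: it is quoted as the classical duality theorem of Dickenstein--Sessa \cite{DS} and Passare \cite{P}, so there is no internal proof to compare against. Your easy inclusion $\mathcal{J}(f_1,\dots,f_p)\subseteq\ann\mu^f$ is fine in outline: after the anticommutation permitted by the complete intersection hypothesis, $f_j$ cancels the corresponding denominator and the surviving factor $\dbar|f_j|^{2\lambda}$ carries a factor of $\lambda$, while the remaining expression is analytic at $\lambda=0$.

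The reverse inclusion, however, is not proved by your proposal; the step you yourself flag as ``the main obstacle'' is the entire content of the theorem. The algebraic computation you describe --- that for a regular sequence the class of $1/(f_1\cdots f_p)$ in $H^p_{Z_f}(\Ok)\cong \Ok_{f_1\cdots f_p}\big/\sum_j \Ok_{f_1\cdots\widehat{f_j}\cdots f_p}$ has annihilator exactly $\mathcal{J}(f)$ --- is correct and routine. What is missing is the injectivity of the map $\Ok/\mathcal{J}(f)\to(\text{currents})$, $g\mapsto g\mu^f$, i.e., that $g\mu^f=0$ \emph{as a current} forces $[g/(f_1\cdots f_p)]=0$ in \emph{algebraic} local cohomology. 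This is precisely where Dickenstein--Sessa invoke the comparison of algebraic and moderate local cohomology (itself a nontrivial theorem), and where Passare's proof does its real analytic work. Your alternative route via Hironaka has the same hole at the descent step: divisibility conditions on $\pi^* g$, chart by chart on a resolution, do not descend to $g\in\mathcal{J}(f_1,\dots,f_p)$ merely by properness or Grauert's direct image theorem when $p\geq 2$; converting such local divisibility into ideal membership downstairs is exactly the hard part. (A modern way to close this gap is an integral-formula decomposition of Koppelman type, $g = f\cdot Tg + S(gR^f)$, so that $gR^f=0$ immediately yields $g\in\mathcal{J}(f)$; nothing of that sort appears in your sketch.) As it stands, the proposal is a correct roadmap of the two classical strategies, not a proof.
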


    The Coleff-Herrera product of a holomorphic mapping is a current on $Z$. Currents on singular varieties
    can be defined in a similar way as on manifolds, i.e., as linear functionals on test-forms,
    see for example \cite{L}. However, currents on $Z$ also has a characterization in terms of currents in the ambient space:
    If $i : Z \to \Omega$ is the inclusion, $\codim Z = k$, and $\mu$ is a $(p,q)$-current on $Z$, then
    $i_* \mu$ is a $(k + p,k + q)$-current on $\Omega$ that vanishes on all forms that vanish on $Z$.
    Conversely, if $T$ is a $(k + p, k + q)$-current on $\Omega$, that vanishes on all forms that vanish on $Z$,
    then $T$ defines a unique $(p,q)$-current $T'$ on $Z$ such that $i_* T' = T$.
    When we consider the Coleff-Herrera product in the ambient space, i.e., $i_* \mu^f$, we will denote it by
    \begin{equation*}
        \dbar \frac{1}{f_p}\wedge\dots\wedge \dbar \frac{1}{f_1} \wedge [Z],
    \end{equation*}
    and in fact, by analytic continuation, it can be defined by
    \begin{equation} \label{eqmufambientreg}
        \dbar\frac{1}{f_p}\wedge \dots \wedge \dbar \frac{1}{f_1} \wedge [Z] =
        \left.\frac{\dbar |f_p|^{2\lambda}\wedge \dots \wedge \dbar |f_1|^{2\lambda}}{f_p\dots f_1} \wedge [Z] \right|_{\lambda = 0}.
    \end{equation}

    On an analytic variety, one can find rather simple examples of functions annihilating the Coleff-Herrera
    product of a complete intersection without lying in the ideal. However, we have an inclusion in one of
    the directions, see \cite{CH}, Theorem~1.7.7.
    \begin{thm} \label{thmanninclusion}
        If $f = (f_1,\dots,f_p)$ are holomorphic on $Z$, defining a complete intersection,
        then $\mathcal{J}(f_1,\dots,f_p) \subseteq \ann \mu^f$.
    \end{thm}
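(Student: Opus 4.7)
The plan is to show $f_j \mu^f = 0$ for each $j = 1, \dots, p$; this suffices since these functions generate $\mathcal{J}(f_1, \dots, f_p)$ over $\Ok_Z$. The first step is to exploit that the factors $\dbar|f_i|^{2\lambda}/f_i$ appearing in \eqref{eqchpdef} are $(0,1)$-current valued and anti-commute, so $\mu^f$ is anti-symmetric under permutations of $(f_1, \dots, f_p)$; it then suffices to treat $j = p$. Substituting $f_p \varphi$ for $\varphi$ in \eqref{eqchpdef} and cancelling $f_p$ in the denominator yields
\begin{equation*}
f_p \mu^f . \varphi = \left.\int_Z \dbar|f_p|^{2\lambda} \wedge T(\lambda) \wedge \varphi\right|_{\lambda = 0}, \quad T(\lambda) := \frac{\dbar|f_{p-1}|^{2\lambda}}{f_{p-1}} \wedge \cdots \wedge \frac{\dbar|f_1|^{2\lambda}}{f_1}.
\end{equation*}

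Next, rewrite $\dbar|f_p|^{2\lambda} = \dbar(|f_p|^{2\lambda} - 1)$. For $\Re\lambda > 0$, each factor of $T(\lambda)$ equals $\dbar(|f_i|^{2\lambda}/f_i)$, so $T(\lambda)$ is $\dbar$-closed. Stokes' theorem on $Z$, applied on $\Zreg$ (with vanishing contribution from $\Zsing$, which has real codimension $\geq 2$ and against which the integrands are locally integrable), moves the $\dbar$ from $|f_p|^{2\lambda} - 1$ onto $\varphi$, yielding, up to sign,
\begin{equation*}
f_p \mu^f . \varphi = \left.(|f_p|^{2\lambda} - 1) T(\lambda)\right|_{\lambda = 0} . \dbar\varphi.
\end{equation*}

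Since $(f_1, \dots, f_{p-1})$ is itself a complete intersection (as a subtuple of $(f_1, \dots, f_p)$), $T(\lambda)$ extends analytically to $\lambda = 0$ with value $\mu^{(f_1, \dots, f_{p-1})}$. Moreover, the complete intersection hypothesis on $(f_1, \dots, f_p)$ forces $f_p$ not to vanish identically on any irreducible component of $\supp \mu^{(f_1, \dots, f_{p-1})} = Z_{(f_1, \dots, f_{p-1})}$ (otherwise $\codim Z_f < p$), so the standard extension property for Coleff-Herrera currents gives $\left.|f_p|^{2\lambda} T(\lambda)\right|_{\lambda = 0} = \mu^{(f_1, \dots, f_{p-1})}$ as well. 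Subtracting the two analytic continuations yields $0$, hence $f_p \mu^f = 0$.

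The main obstacle is this final vanishing step: rigorously identifying the analytic continuation of the scalar-times-current product $(|f_p|^{2\lambda} - 1) T(\lambda)$ at $\lambda = 0$ on the possibly singular variety $Z$. This rests on the standard extension property for Coleff-Herrera products, which invokes the complete intersection hypothesis both for the existence of $T(0)$ and for the non-degeneracy of $f_p$ relative to its support; these are pseudomeromorphic-style facts extending the classical manifold results to analytic varieties. By comparison, the Stokes integration by parts is routine once integrability of the integrands is established.
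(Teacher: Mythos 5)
The paper itself offers no proof of this statement --- it is quoted from \cite{CH}, Theorem~1.7.7 --- so the only comparison available is with the classical argument, and your outline is indeed that argument: reduce to $f_p\mu^f=0$ by antisymmetry of the single-$\lambda$ regularization, cancel $f_p$ against the denominator, integrate by parts using that $T(\lambda)$ is $\dbar$-exact for $\Re\lambda\gg0$, and kill the remaining term at $\lambda=0$. The reduction to $j=p$, the cancellation, and the Stokes step are sound (for $\Re\lambda\gg0$ the integrands are $C^k$ and $d[Z]=0$ extends to such forms by regularization --- the relevant fact is Lelong's theorem rather than the codimension of $\Zsing$, but this is minor), and your observation that the complete intersection hypothesis forces $f_p\not\equiv 0$ on every component of $Z_{(f_1,\dots,f_{p-1})}$ through a point of $Z_f$ is exactly the right use of that hypothesis.

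The gap is in the step you yourself flag as the main obstacle, and the tool you name does not close it. The standard extension property concerns a \emph{fixed} current $\mu$ multiplied by $|h|^{2s}$ with a \emph{fresh} parameter $s$: it gives $\left.|h|^{2s}\mu\right|_{s=0}=\mu$ when $h$ vanishes identically on no component of $\supp\mu$. Your expression $\left.(|f_p|^{2\lambda}-1)T(\lambda)\right|_{\lambda=0}$ has the same $\lambda$ in both factors, and $T(\lambda)$ only acquires a value at $\lambda=0$ by analytic continuation; the SEP as stated says nothing about this diagonal limit. What is actually needed is that the two-parameter function $(s,\lambda)\mapsto\int_Z|f_p|^{2s}\,T(\lambda)\wedge\dbar\varphi$ admits an analytic continuation to a neighborhood of the origin whose diagonal value agrees with the iterated one $\left.|f_p|^{2s}T(0)\right|_{s=0}$ (to which the SEP, or the dimension principle of Proposition~\ref{proppm0}, then applies). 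Establishing this requires a Hironaka resolution making all the $f_i$ monomial, and it is precisely the kind of comparison of regularizations that \cite{LS} is devoted to; such unrestricted limits genuinely depend on the order of continuation outside the complete intersection setting, so the complete intersection hypothesis enters here in a more essential way than your sketch suggests. The skeleton is correct and you have located the difficulty accurately, but as written the decisive vanishing rests on a misapplied standard fact rather than on the diagonal-versus-iterated continuation comparison that actually carries the proof.
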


    In this article, we discuss this inclusion, and give conditions for when the inclusion is an equality,
    and when the inclusion is strict.

    Throughout this article, we will only discuss the duality theorem for strongly holomorphic functions on $Z$,
    i.e., functions $f$ on $Z$, which are locally the restriction of holomorphic functions in the ambient space,
    denoted $f \in \Ok(Z)$.
    When we say holomorphic functions, we refer to strongly holomorphic functions.
    However, we will sometimes refer to them as strongly holomorphic functions, to make a distinction
    to weakly holomorphic, which we use in the introduction to provide examples.
    Recall that a function $f : Z_\reg \to \C$ is \emph{weakly holomorphic} on $Z$, denoted $f \in \Ow(Z)$, if
    $f$ is holomorphic on $Z_\reg$, and $f$ is locally bounded at $Z_\sing$.
    Recall also that a germ of a variety, $(Z,z)$, is said to be \emph{normal} if $\Ok_{Z,z} = \Ow_{Z,z}$,
    and that the \emph{normalization} of a variety $Z$ is the unique (up to analytic isomorphism) normal variety
    $Z'$ together with a finite proper surjective holomorphic map $\pi : Z' \to Z$ such that
    $\pi|_{Z'\setminus\pi^{-1}(Z_\sing)} : Z' \setminus \pi^{-1}(Z_\sing) \to Z_\reg$ is a biholomorphism,
    see for example \cite{Dem}, Section II.7.

    One of the reasons we do not have equality in Theorem~\ref{thmanninclusion} is because of weakly holomorphic functions,
    namely if $f = (f_1,\dots,f_p)$ is strongly holomorphic and defining a complete intersection,
    and $g = \sum a_i f_i$ is strongly holomorphic while the functions
    $a_i$ are only weakly holomorphic, then by Theorem~4.3 in \cite{L} (the analogue of Theorem~\ref{thmanninclusion} for weakly
    holomorphic functions), $g \mu^f = 0$,
    but it might very well happen that the $a_i$ cannot be chosen to be strongly holomorphic.
    For example, let $Z = \{ z^3 = w^2 \} \subseteq \C^2$, which has normalization $\pi(t) = (t^2,t^3)$, and let $f \in \Ow(Z)$
    be such that $\pi^* f = t$.
    Then $f^2 = z$ and $f^3 = w$ on $Z$, so that $f^2, f^3 \in \Ok(Z)$ and $f^3 \dbar (1/f^2) = 0$
    (note that since $f^2$ is strongly holomorphic on $Z$, we see this as a current on $Z$, as explained above),
    while $f^3 \neq g f^2$ for any $g \in \Ok(Z)$, since $f \notin \Ok(Z)$.
    That $f^3 \dbar (1/f^2) = 0$ can be seen either by going back to the normalization, where we get $t^3 \dbar (1/t^2)$,
    which is $0$ by the (smooth) duality theorem, or by seeing it as a current in the ambient space, and using the Poincar\'e-Lelong
    formula as in Example~\ref{ex1} below.

    Let us now consider a germ of a normal variety $(Z,z)$, and the Coleff-Herrera product of one holomorphic function.
    Assume that $g \in \ann \dbar(1/f)$. Since $\dbar (1/f)$ is just $\dbar$ of $1/f$ in the current sense
    and $g$ is holomorphic, we get that
    \begin{equation*}
        \dbar\left(g \frac{1}{f}\right) = 0.
    \end{equation*}
    In the smooth case, by regularity of the $\dbar$-operator on $0$-currents, $g (1/f)$ would be a holomorphic
    function. This will not hold in general on a singular space (as the example above shows). However, we
    get that $g/f \in \Ok(Z_\reg)$. If $(Z,z)$ is normal, then $\codim (Z_\sing,z) \geq 2$ in $Z$, and
    any function holomorphic on an analytic variety outside some subvariety of codimension $\geq 2$
    is locally bounded, see \cite{Dem}, Proposition~II.6.1. Thus, $g/f$ is weakly holomorphic,
    and since $(Z,z)$ is normal, $g/f \in \Ok_{Z,z}$, i.e., $g \in \mathcal{J}(f)$.
    Combined with Theorem~\ref{thmanninclusion}, we get that the duality theorem holds for the
    Coleff-Herrera product of one holomorphic function on $(Z,z)$ if it is normal.

    Assume now that $(Z,z)$ is not normal. Then, there exists $\phi \in \Ow_{Z,z} \setminus \Ok_{Z,z}$.
    Since weakly holomorphic functions are meromorphic, we can write $\phi = g/h$
    for some strongly holomorphic functions $g$ and $h$.
    Then $g \dbar (1/h) = 0$, by Theorem~4.3 in \cite{L} (the analogue of Theorem~\ref{thmanninclusion} for weakly
    holomorphic functions). However, since $g/h = \phi \in \Ow_{Z,z} \setminus \Ok_{Z,z}$,
    $g \notin \mathcal{J}(h)$ (in $\Ok_{Z,z}$).

    Hence, in the case of the Coleff-Herrera product of one single holomorphic function on a germ of an analytic variety $(Z,z)$,
    we get that the duality theorem holds for all $f$ if and only if $(Z,z)$ is normal.
    The next example shows that this characterization does not extend to tuples of holomorphic functions.

    \begin{ex} \label{ex1}
        Let $Z = \{ z_1^2 + \cdots + z_k^2 = 0 \} \subseteq \C^k$, where $k \geq 3$.
        Then $Z$ is normal since $Z$ is a reduced complete intersection with $Z_\sing = \{ 0 \}$,
        and a reduced complete intersection is normal if and only if $\codim Z_\sing \geq 2$
        (see the discussion after Definition~\ref{defpduality}).
        Let $\mu = \dbar (1/z_{k-1})\wedge\cdots\wedge\dbar (1/z_1)$ (seen as a current on $Z$).
        We claim that $z_k \mu = 0$. To see this, we consider this as a current in the ambient space,
        i.e., $i_*(z_k \mu)$, and use the Poincar\'e-Lelong formula,
        \begin{equation*}
            i_*(z_k \mu) = z_k\dbar \frac{1}{z_{k-1}}\wedge\cdots\wedge\dbar \frac{1}{z_1}\wedge
            \frac{1}{2\pi i}\dbar \frac{1}{z_1^2 + \cdots + z_k^2}\wedge d(z_1^2 + \cdots + z_k^2).
        \end{equation*}
        Then, $z_kdz_i^2 = 2z_iz_k dz_i$ and $z_iz_k \in \mathcal{J}(z_1,\cdots,z_{k-1},z_1^2+\cdots + z_k^2)$
        for $i = 1,\dots,k$, so each such term annihilates the current by Theorem~\ref{thmanninclusion}.
        However, $z_k \not\in \mathcal{J}(z_1,\cdots,z_{k-1})$ in $\Ok(Z)$.
    \end{ex}

    We will show that depending on certain singularity subvarieties of the analytic sheaf $\Ok_Z$,
    compared to the zero set of $f$, we can give sufficient (and in many cases necessary) conditions
    for when the duality theorem holds on an analytic variety.
    This condition can be seen as a generalization of normality, coinciding with the usual notion
    of normality in the case $p = 1$.

    Given a coherent ideal sheaf $\mathcal{J}$, there exists locally a finite free resolution
    \begin{equation} \label{eqfreeres}
        0 \to \Ok(E_N) \xrightarrow[]{\varphi_N} \Ok(E_{N-1}) \to \cdots \xrightarrow[]{\varphi_1} \Ok(E_0)
    \end{equation}
    where $\Ok(E_k)$ is the sheaf associated to the vector bundle $E_k$.
    We define $Z_k$ as the set of points where $\varphi_k$ does not have optimal rank.
    If $Z = Z(\mathcal{J})$ and $p = \codim Z$, then $Z_1 = \dots = Z_p = Z$
    and $Z_{k+1} \subseteq Z_k$, see \cite{E}, Corollary 20.12.
    If $\mathcal{J} = \mathcal{J}_Z$, the ideal of holomorphic functions vanishing on $Z$,
    then we define
    \begin{equation} \label{eqzkintrinsic}
        Z^0 := Z_\sing \quad \text{and} \quad Z^k := Z_{p + k} \quad \text{ for } k \geq 1,
    \end{equation}
    where $p = \codim Z$. These sets are in fact independent of the choice of resolution by
    the uniqueness of minimal free resolutions in a local Noetherian ring, and from Lemma~3.1
    and the remark following it in \cite{AW3},
    $Z^k$ are independent of the local embedding of $Z$ into $\Cn$.
    Hence they are intrinsic subvarieties of $Z$.
    We will use the convention that $\codim Z^k$ refers to the codimension in $Z$,
    while by $\codim Z_k$, we refer to the codimension in the ambient space.

    \begin{thm} \label{annmuf}
        Let $f = (f_1,\dots,f_p)$ be a holomorphic mapping on a germ of an analytic variety $(Z,z)$ defining a
        complete intersection. If $\codim (Z^k \cap Z_f) \geq k + p + 1$ for $k \geq 0$,
        then $\ann \mu^f = \mathcal{J}(f_1,\dots,f_p)$.
    \end{thm}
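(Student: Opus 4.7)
By Theorem~\ref{thmanninclusion} the only inclusion left to prove is $\ann \mu^f \subseteq \mathcal{J}(f_1,\dots,f_p)$. My plan is to reduce the problem to the ambient space: choose a local embedding of $(Z,z)$ into $(\Omega,z)\subseteq(\C^n,0)$ with $q := \codim Z$, extend each $f_i$ to $\tilde f_i \in \Ok(\Omega)$, and, given $g \in \ann\mu^f$, pick an extension $\tilde g \in \Ok(\Omega)$. The hypothesis $g\mu^f = 0$ is then equivalent to
\begin{equation*}
    \tilde g \cdot \dbar\frac{1}{\tilde f_p}\wedge\dots\wedge\dbar\frac{1}{\tilde f_1}\wedge[Z] = 0 \qquad \text{in } \Omega,
\end{equation*}
and the desired conclusion $g \in \mathcal{J}(f_1,\dots,f_p)$ inside $\Ok_Z$ is equivalent to $\tilde g \in (\tilde f_1,\dots,\tilde f_p) + \mathcal{J}_Z$ inside $\Ok_\Omega$.

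The next step is to represent $i_*\mu^f$ as a residue current attached to a free resolution, so that a general duality theorem is at hand. The free resolution \eqref{eqfreeres} of $\Ok_\Omega/\mathcal{J}_Z$ has length $q$ and carries an associated residue current $R^Z$ with $\ann R^Z = \mathcal{J}_Z$, from which $[Z]$ can be reconstructed. Tensoring \eqref{eqfreeres} with the Koszul complex $K_\bullet(\tilde f)$ of length $p$ yields a complex $K_\bullet(\tilde f)\otimes_{\Ok_\Omega} E_\bullet$ of length $p+q$ whose zeroth cohomology is $\Ok_\Omega/((\tilde f_1,\dots,\tilde f_p) + \mathcal{J}_Z)$, and whose associated residue current coincides, up to a nonzero factor, with $\dbar(1/\tilde f_p)\wedge\dots\wedge\dbar(1/\tilde f_1)\wedge[Z] = i_*\mu^f$. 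If the combined complex is \emph{exact}, the general duality theorem for residue currents of free resolutions (in the spirit of Andersson--Wulcan) gives $\ann(i_*\mu^f) = (\tilde f_1,\dots,\tilde f_p) + \mathcal{J}_Z$, and restricting back to $Z$ finishes the proof.

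The main obstacle is therefore to establish the exactness of the tensor complex under the hypothesis $\codim(Z^k \cap Z_f) \geq k+p+1$ in $Z$. I would verify this through the Buchsbaum--Eisenbud exactness criterion. Outside $Z$ the complex $E_\bullet$ is already exact, and outside $Z_f$ the Koszul complex $K_\bullet(\tilde f)$ is exact, so any failure of exactness is confined to $Z \cap Z_f$. A direct analysis should identify the locus where the $(p+q+k)$-th differential of $K_\bullet(\tilde f)\otimes E_\bullet$ fails to have optimal rank with $Z^k \cap Z_f$; the hypothesis then translates into the requirement that this locus has codimension $\geq p+q+1+k$ in $\Omega$, which is exactly what Buchsbaum--Eisenbud demands at that step. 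This bookkeeping of degeneracy loci is the technical heart of the argument; once it is carried out, exactness of the tensor complex follows, and with it the duality statement.
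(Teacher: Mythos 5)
Your reduction to the ambient space and the idea of bringing in the tensor product of the Koszul complex of $\tilde f$ with a free resolution of $\Ok/\mathcal{J}_Z$ are both in line with the paper, but there is a genuine gap at the central step: the current $i_*\mu^f = R^f\wedge[Z]$ is \emph{not} the residue current of the tensor complex, not even up to a nonzero factor. The residue current of $K_\bullet(\tilde f)\otimes E_\bullet$ is $R^f\wedge R^Z$, where $R^Z$ is the Andersson--Wulcan current of the resolution of $\Ok/\mathcal{J}_Z$; the integration current is related to it by $[Z]=\sum_i \xi_i\wedge R^Z_{p,i}$ for certain holomorphic forms $\xi_i$ (and, for a minimal resolution, these $\xi_i$ vanish on $\Zsing$ -- that is Theorem~\ref{thmxivanish}). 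Consequently $\ann(R^f\wedge[Z])\supseteq\ann(R^f\wedge R^Z)$, and the inclusion can be strict: this is exactly the mechanism behind the counterexamples of Proposition~\ref{propcounterex1}. Indeed, if your identification were valid, exactness of the tensor complex would suffice, and that exactness (via Buchsbaum--Eisenbud, or Corollary~\ref{corranntensprod}) only requires $\codim(Z_f\cap Z^k)\geq p+k$ for $k\geq 1$ -- a condition that is vacuous when $Z$ is Cohen--Macaulay. Your argument would then give $p$-duality for every complete intersection on every Cohen--Macaulay germ, contradicting Example~\ref{ex1}. Note also that your own bookkeeping betrays the discrepancy: Buchsbaum--Eisenbud asks for codimension $\geq p+q+k$ at the $(p+q+k)$-th step, whereas the hypothesis of the theorem gives one more.

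The missing ingredient is precisely the bridge from $\ann(R^f\wedge[Z])$ to $\ann(R^f\wedge R^Z)$, and this is where the extra ``$+1$'' in the hypothesis is consumed. The paper's argument: outside $\Zsing$ there is a smooth $(q,0)$-vector field $\gamma$ with $\gamma\imult[Z]=R^Z_q$, so $g R^f\wedge[Z]=0$ forces $gR^f\wedge R^Z_q$ to be supported on $Z_f\cap\Zsing$, which has codimension $\geq p+q+1$; since this is a pseudomeromorphic $(0,p+q)$-current, Proposition~\ref{proppm0} kills it. One then proceeds by induction on $k$ using $R^Z_{q+k+1}=\alpha_{q+k+1}R^Z_{q+k}$ outside $Z^{k+1}$ together with $\codim(Z_f\cap Z^{k+1})\geq p+q+k+2$ to conclude $gR^f\wedge R^Z_{q+k+1}=0$. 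Only after this does one invoke $\ann(R^f\wedge R^Z)=\mathcal{J}(f)+\mathcal{J}_Z$ from the tensor-product duality. Without this pseudomeromorphic dimension-principle argument, your proof does not go through.
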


    The proof of Theorem~\ref{annmuf} is in Section~\ref{sectproofannmuf}.

    One might conjecture that this equality of the annihilator and the ideal holds if and only if
    the conditions in the theorem are satisfied. We have not been able to prove this in this
    generality, but have focused on a slightly weaker formulation of it.
    To do this, we introduce the notion of $p$-duality for an analytic variety.
    \begin{df} \label{defpduality}
        If $(Z,z)$ is a germ of an analytic variety, we say that $(Z,z)$ has \emph{$p$-duality} if for all
        $f = (f_1,\dots,f_p) \in \Ok_{Z,z}^{\oplus p}$ defining a complete intersection, we have $\ann \mu^f = \mathcal{J}(f_1,\dots,f_p)$.
    \end{df}
    Theorem~\ref{annmuf} implies the following statement:
    \begin{equation} \label{pdualitystatement}
        \tag{$*$}
        (Z,z) \text{ has $p$-duality if } \codim Z^k \geq p + k + 1, \text{ for } k \geq 0.
    \end{equation}
    We believe that the converse of \eqref{pdualitystatement} holds, and we will discuss this throughout
    the rest of this introduction. We show that indeed, in many cases, the converse of \eqref{pdualitystatement}
    holds, and if the condition in \eqref{pdualitystatement} is not a precise condition for $p$-duality,
    it is at least very close to being so.

    We saw above that $1$-duality is equivalent to that $Z$ is normal.
    The condition $\codim Z^k \geq k + 2$ in \eqref{pdualitystatement} is exactly the condition that $Z$
    is normal. This is proved in \cite{M}, but can also be seen using the conditions R1 and S2 in
    Serre's criterion for normality. Indeed, one can verify that the conditions R1 and S2 are equivalent
    to the condition $\codim Z^k \geq k + 2$.
    Thus, the converse of \eqref{pdualitystatement} holds when $p = 1$.

    Recall that a germ $(Z,z)$ is said to be \emph{Cohen-Macaulay} if the ring $\Ok/\mathcal{J}_{Z,z}$ is Cohen-Macaulay.
    More concretely, this means that $\Ok/\mathcal{J}_{Z,z}$ has a free resolution of length $p = \codim (Z,z)$.
    Equivalently, $Z^k = \emptyset$ for $k \geq 1$. Hence, if $(Z,z)$ is Cohen-Macaulay,
    the condition $\codim Z^k \geq p + k$ for $k \geq 0$ becomes just $\codim Z_\sing \geq p$.
    In case $(Z,z)$ is Cohen-Macaulay, the converse of \eqref{pdualitystatement} holds.

    \begin{prop} \label{propcounterex1}
        Assume that $(Z,z)$ is Cohen-Macaulay and that $\codim Z_\sing = k$.
        If $q \geq k$, then there exists $f = (f_1,\dots,f_q) \in \Ok_{Z,w}^{\oplus q}$,
        for some $w$ arbitrarily close to $z$, defining a complete intersection, and
        $g \in \Ok_{Z,w}$ such that $g \in \ann \mu^f$, but $g \notin \mathcal{J}(f_1,\dots,f_q)$.
    \end{prop}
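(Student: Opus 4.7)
Plan. The idea is to mimic the construction of Example~\ref{ex1} at a well-chosen point of $\Zsing$ near $z$.

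Step 1: Choice of $w$ and a regular sequence $f$. Since $\Zsing$ has codimension $k$ in $Z$, its smooth locus is dense in $\Zsing$, so we may pick $w$ arbitrarily close to $z$ lying in the smooth part of some irreducible component of $\Zsing$ of codimension $k$. Since $(Z,w)$ is Cohen--Macaulay and the ideal $\mathcal{J}_{\Zsing,w}$ has height $k$, it contains a regular sequence $f_1,\dots,f_k$; by prime avoidance in the Cohen--Macaulay local ring $\Ok_{Z,w}$, we extend this to a regular sequence $f_1,\dots,f_q$ of length $q$, which is possible provided $q \leq \dim Z$ (if $q > \dim Z$, no complete intersection of length $q$ exists on $Z$ at all and the statement is vacuous). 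Then $f = (f_1,\dots,f_q)$ defines a complete intersection on $Z$ near $w$, and $Z_f \subseteq \Zsing$ locally.

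Step 2: Construction of $g$ via ambient residues. Embed $Z \hookrightarrow \Omega \subseteq \C^n$ locally near $w$, extend the $f_i$ to $\tilde f_i \in \Ok_\Omega$, and let $h_1,\dots,h_m$ generate $\mathcal{J}_Z$. The pushforward $i_*\mu^f$ admits a representation in the ambient space as a residue current built from the $\tilde f_i$'s and the $h_j$'s, via the Poincar\'e--Lelong formula when $Z$ is a complete intersection in $\Omega$, and more generally via ambient residues associated to a free resolution of $\Ok_Z$. As in Example~\ref{ex1}, we seek $g \in \Ok_{\Omega,w}$ such that each product $g\, dh_j$ appearing in the expansion of $i_*(g\mu^f)$ lies in the ambient ideal $(\tilde f_1,\dots,\tilde f_q) + \mathcal{J}_Z$. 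Theorem~\ref{thmanninclusion} applied in the ambient space then forces every such term to vanish, so $g\mu^f = 0$ as a current on $Z$.

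Step 3: Non-membership $g \notin \mathcal{J}(f)$. The verification exploits the local analytic structure of $Z$ at the singular point $w$: although the quotient $\Ok_{Z,w}/\mathcal{J}(f)$ is Cohen--Macaulay (since $f$ is a regular sequence on a Cohen--Macaulay ring), it still carries nonzero classes coming from the singular directions of $Z$ at $w$ (analogous to the class of $z_k$ in Example~\ref{ex1}), and $g$ is chosen to represent such a class.

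The main obstacle is realizing Steps 2 and 3 simultaneously: exhibiting a single $g$ which, on the one hand, annihilates $\mu^f$ through the ambient Poincar\'e--Lelong mechanism and, on the other hand, is nonzero in $\Ok_{Z,w}/\mathcal{J}(f)$. This rests on a careful analysis of the local defining ideal of $Z$ at $w$ and the explicit ambient expression of $[Z]$ in terms of generators of $\mathcal{J}_Z$, and essentially amounts to extending the quadric-cone computation of Example~\ref{ex1} to an arbitrary Cohen--Macaulay germ with $\codim \Zsing = k$.
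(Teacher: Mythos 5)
There is a genuine gap: your Steps 2 and 3 do not construct anything — they restate the problem. You write ``we seek $g$ such that each product $g\,dh_j$ \dots lies in the ambient ideal'' and then concede that ``the main obstacle is realizing Steps 2 and 3 simultaneously,'' which is precisely the content of the proposition. The paper resolves this with two concrete ingredients that are absent from your proposal. First, the choice of $g$: with $\mathcal{I} = \mathcal{J}(f_1,\dots,f_q)_w$ and $V = Z(\mathcal{I}) \subseteq \Zsing$, the Nullstellensatz gives a minimal $m$ with $\mathcal{J}_{V,w}^m \subseteq \mathcal{I}$, and one takes $g \in \mathcal{J}_{V,w}^{m-1}\setminus\mathcal{I}$; then $g \notin \mathcal{I}$ by minimality of $m$ (your Step 3 for free) and $g\,\mathcal{J}_{V,w} \subseteq \mathcal{I}$, which is exactly the multiplicative property needed in Step 2. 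Second, the generalization of the ``$dh_j$ vanishes on $\Zsing$'' phenomenon from Example~\ref{ex1} is not a routine extension of the quadric-cone computation: it is Theorem~\ref{thmxivanish}, which asserts that in the representation $[Z] = \sum \xi_i \wedge R^Z_{p,i}$ associated to a \emph{minimal} free resolution of $\Ok_Z$, all the coefficient forms $\xi_i$ vanish at $\Zsing$. Its proof requires the socle-dimension count of Lemmas~\ref{lmadimsoclerank} and~\ref{lmagensofideal} to show that no component $R^Z_{p,i}$ is redundant; nothing in your outline supplies this.

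Once these two pieces are in place the argument closes quickly: since $\mathcal{J}_{\Zsing,w} \subseteq \mathcal{J}_{V,w}$ and each $\xi_{I,i}$ vanishes on $\Zsing$, one gets $g\,\xi_{I,i} \in \mathcal{I}$, hence $g\,\xi_{I,i} \in \mathcal{I} + \mathcal{J}_Z = \ann(R^f \wedge R^Z)$ by Corollary~\ref{corranntensprod}, so $g\,R^f\wedge[Z] = \sum g\,\xi_i \wedge R^f \wedge R^Z_{p,i} = 0$. Your Step 1 is essentially the paper's use of Lemma~\ref{lmaci3} (though the paper arranges $(\Zsing,w) = \{f_1=\dots=f_k=0\}\cap(Z,w)$ exactly, not merely a containment), so that part is fine; but without the Nullstellensatz-exponent construction of $g$ and without Theorem~\ref{thmxivanish}, the core of the proof is missing.
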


    \begin{remark}
        In general, we need to move to a nearby germ in order to find the counterexample,
        however, if $Z_\sing$ is a complete intersection in $Z$, we can take $w = z$.
    \end{remark}

    In particular, if $(Z,z)$ is a reduced complete intersection, then $(Z,z)$ is Cohen-Macaulay
    since the Koszul complex is a free resolution of length $\codim (Z,z)$, see \cite[p. 688]{GH}.

    In Example~\ref{ex1}, $(Z,0)$ is Cohen-Macaulay (since it is a reduced complete intersection)
    and $Z_\sing = \{ 0 \}$, which has codimension $k-1$ in $(Z,0)$.
    Proposition~\ref{propcounterex1} then says that there exists a complete intersection $f = (f_1,\dots,f_{k-1})$
    and $g \notin \mathcal{J}(f_1,\dots,f_{k-1})$ such that $g \in \ann \mu^f$.
    Then $f = (z_1,\dots,z_{k-1})$ and $g = z_k$ is exactly such an example,
    while for any complete intersection of codimension $ < k-1$, the duality theorem holds
    by Theorem~\ref{annmuf}.

    If $(Z,z)$ is not Cohen-Macaulay, we get the converse of \eqref{pdualitystatement}
    only for the least $p$ such that the condition in \eqref{pdualitystatement} is not satisfied.

    \begin{prop} \label{propcounterex2}
        Assume that $(Z,z)$ satisfies $\codim Z^k \geq k + p$ for all $k \geq 0$, with equality for some $k\geq 1$.
        Then there exists $f = (f_1,\dots,f_p) \in \Ok_{Z,z}^{\oplus p}$ defining a complete intersection,
        and $g \in \Ok_{Z,z}$, such that $g \in \ann \mu^f$, but $g \notin \mathcal{J}(f_1,\dots,f_p)$.
    \end{prop}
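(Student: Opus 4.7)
The plan is to construct a bad pair $(f,g)$ by forcing $Z_f$ to pass through the set $Z^k$ at which the codimension bound is tight, and then to extract $g$ from the non-Cohen--Macaulay defect of $\Ok_Z$ at $z$ via the residue-current machinery associated to a free resolution.

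\emph{Step 1 (choice of $f$).} Pick $k\geq 1$ with $\codim_Z Z^k = k+p$. The ideal $\mathcal J_{Z^k}\subset \Ok_{Z,z}$ has height $k+p\geq p+1$, so iterated prime avoidance against the minimal primes of the successive quotients $\Ok_{Z,z}/(f_1,\dots,f_j)$ produces $f_1,\dots,f_p\in\mathcal J_{Z^k}$ defining a complete intersection of codimension $p$ in $Z$, with $Z_f\supseteq Z^k$ near $z$. In particular $\codim_Z(Z_f\cap Z^k)=k+p<k+p+1$, so the sufficient condition of Theorem~\ref{annmuf} is violated for this $f$ --- a necessary prerequisite for producing a counterexample.

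\emph{Step 2 (choice of $g$, and the main obstacle).} Let $(E_\bullet,\phi_\bullet)$ be a minimal free resolution of $\Ok_Z$ in the ambient $\Cn$; by the characterization of the sets $Z^k$ and Auslander--Buchsbaum at $z\in Z^k$, this resolution has length at least $P+k$, where $P=\codim_{\Cn}Z$, and the associated Andersson--Wulcan residue current $R^E=\sum_{j\geq P}R^E_j$ has a non-vanishing component $R^E_{P+k}$ supported on $Z^k$ at $z$. Writing $i_*\mu^f$ in the ambient as $\dbar(1/f_p)\wedge\cdots\wedge\dbar(1/f_1)\wedge R^E_P$ and using that each $f_i\in\mathcal J_{Z^k}$ vanishes on $\supp R^E_{P+k}$, I would construct $g\in\Ok_{Z,z}$ by lifting a non-trivial class of $\Ker\phi_{P+k+1}^*/\Im\phi_{P+k}^*$ through a Hefer-type cochain down to $\Ok(E_0)^*=\Ok$; this $g$ satisfies $g\mu^f=0$ by a cancellation argument mirroring the Poincar\'e--Lelong computation of Example~\ref{ex1}. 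That $g\notin\mathcal J(f)$ then follows because any representation $g=\sum a_i f_i$ with $a_i\in\Ok_{Z,z}$ would yield a null-homotopy contradicting the non-vanishing of the Ext-class represented by $R^E_{P+k}$ at $z$.

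The hard part is making Step 2 precise: extracting the explicit $g$ from the residue calculus, and verifying $g\mu^f=0$ as a current on $Z$ (not merely as an identity of currents in the ambient), requires careful bookkeeping of how the higher components of $R^E$ interact with the pole factors $\dbar(1/f_i)$ and with the intrinsic description of currents on $Z$ from \cite{L}. Conceptually, Steps 1 and 3 are routine; the entire content of the proposition is that the extra length of the minimal free resolution beyond $P+p$ --- which is forced by $z\in Z^k$ --- translates into an extra annihilator of $\mu^f$ outside $\mathcal J(f)$.
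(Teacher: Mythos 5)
Your Step~1 is fine as far as it goes, but Step~2 is the entire content of the proposition and you have not actually carried it out: no function $g$ is produced, and neither of the two required properties ($g\mu^f=0$ and $g\notin\mathcal J(f)$) is verified. The sketched route --- lifting a class of $\Ker \phi_{P+k+1}^*/\Im \phi_{P+k}^*$ through a Hefer cochain and appealing to a ``non-vanishing Ext-class'' --- is not an argument; in particular it is not clear why such a lift would land in $\ann\mu^f$, nor why a representation $g=\sum a_i f_i$ with $a_i\in\Ok_{Z,z}$ would contradict anything about the minimal free resolution. As you yourself concede, ``the hard part is making Step~2 precise,'' and that is precisely the part that is missing.

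The paper's proof avoids constructing $g$ altogether, and it is worth contrasting with your plan. It chooses a complete intersection $(f_1,\dots,f_{p+1})$ of length $p+1$ (not $p$) inside $\mathcal J_{Z^1}$ (not $\mathcal J_{Z^k}$), which is possible by Lemma~\ref{lmaci2} since $\codim Z^1\geq p+1$. The assumed equality $\codim Z^k=k+p$ for some $k\geq1$ gives, via the Siu--Trautmann characterization (Corollary~\ref{corzkregseq}), that $\depth_{Z^1}\Ok_Z\leq p$, so $(f_1,\dots,f_{p+1})$ cannot be a regular $\Ok_{Z,z}$-sequence. On the other hand, Lemma~\ref{lmaannregseq} (a short dimension-principle argument using Proposition~\ref{proppm0}) shows that if the duality theorem held for all subtuples $(f_1,\dots,f_r)$ with $r\leq p$, the full tuple would be a regular sequence. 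Since Theorem~\ref{annmuf} already gives duality for $r\leq p-1$ (because $\codim Z^j\geq j+p\geq j+r+1$ for all $j\geq 0$), the failure must occur exactly at $r=p$, which is the assertion. So the existence of $g$ comes out of a purely indirect depth argument; to salvage your approach you would either have to supply the missing residue-theoretic construction in Step~2, or switch to this regular-sequence bootstrap, for which your length-$p$ tuple in $\mathcal J_{Z^k}$ is not the right starting point.
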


    If $p=1$, then the weakly holomorphic functions give rise to counterexamples as described above.

    The proofs of Proposition~\ref{propcounterex1} and Proposition~\ref{propcounterex2}
    are in Section~\ref{sectproofce1} and Section~\ref{sectproofce2} respectively.
    To prove Proposition~\ref{propcounterex1}, we use Theorem~\ref{thmxivanish}, which says
    that there exists a tuple $\xi$ of holomorphic $(p,0)$-forms such that
    \begin{equation} \label{eqintcurrepr}
        [Z] = \sum \xi_i \wedge R^Z_i,
    \end{equation}
    where $[Z]$ is the integration current on $Z$, and $R^Z = (R^Z_1,\dots,R^Z_N)$ is a tuple of
    currents such that $\mathcal{J}_Z = \cap_{i=1}^N \ann R^Z_i$, and the current $R^Z$ is
    defined by means of a free resolution of $\Ok/\mathcal{J}_Z$, see Section~\ref{sectrescur}.
    The existence of such $\xi_i$ is proved in \cite{A2}, but the tuple $\xi$ is not explicitly
    given. What we prove in Theorem~\ref{thmxivanish} is that if $R^Z$ is the current associated
    with a \emph{minimal} free resolution, then all $\xi_i$ vanish at $Z_\sing$. 
    This result can be seen as a generalization of the Poincar\'e-Lelong formula from
    the reduced complete intersection case to the Cohen-Macaulay case.
    In the reduced complete intersection case, the representation \eqref{eqintcurrepr} is given
    by the Poincar\'e-Lelong formula, and since in that case, $\xi$ is explicitly
    given, the fact that $\xi$ vanish at $Z_\sing$ follows from the implicit function
    theorem, see Section~\ref{sectrcicase}.

    Summarizing Theorem~\ref{annmuf} and Propositions \ref{propcounterex1} and \ref{propcounterex2},
    we get the following.

    \begin{cor} \label{corqduality}
        Assume that $\codim Z^k \geq k + p$ for all $k \geq 0$, with equality for some $k$.
        Then $(Z,w)$ has $q$-duality for $q < p$ and all $w$ in some neighborhood of $z$,
        and $(Z,w)$ does not have $q$-duality for $q = p$ for some $w$ arbitrarily close to $z$.
        In addition, if $\codim Z_\sing = p$, that is, we have equality for $k = 0$,
        then $(Z,w)$ does not have $q$-duality for $q > p$ for some $w$ arbitrarily close to $z$.
    \end{cor}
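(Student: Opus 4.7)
My plan is to combine the positive result of Theorem~\ref{annmuf} (equivalently, statement $(*)$) for the $q < p$ direction with the counterexample statements of Propositions~\ref{propcounterex1} and \ref{propcounterex2} for the failure directions. For the first assertion, I would first observe that each $Z^k$ is an analytic subvariety of $Z$, so the codimension of its germs is lower semi-continuous: on a sufficiently small neighborhood $U$ of $z$, every $w \in U$ satisfies $\codim (Z^k, w) \geq \codim (Z^k, z) \geq p + k$. For $q < p$, this gives $\codim (Z^k, w) \geq q + k + 1$ for all $k \geq 0$, and statement $(*)$ applied at $w$ yields $q$-duality.

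For the second assertion, I split into two cases according to which index achieves equality in the codimension bound. If equality holds for some $k \geq 1$, then Proposition~\ref{propcounterex2} applies at $z$ directly. If equality holds only at $k = 0$, then $\codim \Zsing = p$ while $\codim Z^k \geq p + k + 1$ for every $k \geq 1$, and I must pass to a nearby point. I would invoke the characterization that $w$ is a Cohen-Macaulay point of $Z$ precisely when $w \notin \bigcup_{k \geq 1} Z^k$; since that union is a proper analytic subvariety of $\Zsing$ (its codimension strictly exceeds $p$), one can choose $w \in \Zsing$ arbitrarily close to $z$ on a codimension-$p$ component of $\Zsing$ through $z$ and outside every $Z^k$ with $k \geq 1$. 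Then $(Z,w)$ is Cohen-Macaulay with $\codim \Zsing = p$ at $w$, so Proposition~\ref{propcounterex1} with $k = q = p$ yields a counterexample to $p$-duality at a point arbitrarily close to $w$, and hence to $z$.

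For the third assertion I would reuse the same Cohen-Macaulay reduction: the bounds $\codim Z^k \geq p + k \geq p + 1 > p = \codim \Zsing$ for $k \geq 1$ again guarantee Cohen-Macaulay points $w \in \Zsing$ arbitrarily close to $z$ with $\codim \Zsing = p$ at $w$, and Proposition~\ref{propcounterex1} applied with $k = p$ and any $q > p$ (so $q \geq k$) delivers the desired failure of $q$-duality near $w$. The main technical point I expect to require care is the Cohen-Macaulay reduction used twice above: namely, that avoiding $\bigcup_{k \geq 1} Z^k$ characterizes Cohen-Macaulay points of $Z$ (a statement about the length of minimal free resolutions, implicit in the discussion preceding Proposition~\ref{propcounterex1}), and that the codimension conditions assumed for the germ $(Z,z)$ transfer to nearby germs. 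Both are essentially standard, but should be stated explicitly since the sets $Z^k$ are defined via a global free resolution.
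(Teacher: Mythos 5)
Your proposal is correct and follows essentially the same route as the paper: the positive part from Theorem~\ref{annmuf} via semicontinuity of $\codim Z^k$, failure at $q=p$ from Proposition~\ref{propcounterex2} (or, when equality holds only at $k=0$, from Proposition~\ref{propcounterex1}), and failure at $q>p$ by passing to a nearby point $w\in Z^0\setminus Z^1$ where $(Z,w)$ is Cohen--Macaulay with $\codim\Zsing=p$ and applying Proposition~\ref{propcounterex1}. If anything, you are more careful than the paper, which treats the $q=p$, non-Cohen--Macaulay, equality-only-at-$k=0$ subcase as immediate, whereas it really needs the same reduction to a nearby Cohen--Macaulay point that you spell out.
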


    \begin{proof}
        The only part that does not follow immediately from Theorem~\ref{annmuf}, Proposition~\ref{propcounterex1}
        and Proposition~\ref{propcounterex2} is if $q > p$, $(Z,z)$ is not Cohen-Macaulay but
        there is equality in $\codim Z^k \geq k + p$ for $k = 0$.
        However, in that case, $\codim Z^0 = p$ and $\codim Z^1 \geq p + 1$,
        so since $Z^0 \supseteq Z^1$, there is some $w \in Z^0$ arbitrarily close to $z$ such that
        $(Z,w)$ is Cohen-Macaulay (i.e., $w \in Z^0 \setminus Z^1$), and we can apply Proposition~\ref{propcounterex1}.
    \end{proof}

\section{The case of a reduced complete intersection} \label{sectrcicase}

We begin by showing how to prove Corollary~\ref{corqduality} in the case when $Z$ is a reduced complete
intersection, i.e., that $Z = \{ h_1 = \dots = h_r = 0 \}$, where $r = \codim Z$, and $dh_1\wedge \dots \wedge dh_r \neq 0$
generically on $Z$.
\begin{prop} \label{propciduality}
    Let $(Z,z)$ be a reduced complete intersection and assume that $\codim Z_\sing = p$.
    Then, for all $w$ in some neighborhood of $z$, $(Z,w)$ has $q$-duality for $q < p$,
    and there exists $w$ arbitrarily close to $z$ such that $(Z,w)$ does not have $q$-duality
    for $q \geq p$.
\end{prop}
In this case, the main ideas of the proof in the general case appear, but it only involves
the Coleff-Herrera product, and hence we avoid many of the technicalities of the proof in the general case.

By the Poincar\'e-Lelong formula, see Section 3.6 in \cite{CH},
\begin{equation} \label{eqpl}
    \frac{1}{(2\pi i)^r} \dbar \frac{1}{h_r}\wedge \dots \wedge \dbar \frac{1}{h_1} \wedge dh_1 \wedge \dots \wedge dh_r = [Z].
\end{equation}
Now, let $f = (f_1,\dots,f_q)$ be a complete intersection on $Z$, and consider $\mu^f$ as a current in the ambient space,
as given by \eqref{eqmufambientreg}.
By considering the regularization of $\mu^f$ in \eqref{eqmufambientreg}, using the Poincar\'e-Lelong formula \eqref{eqpl}
on $[Z]$, and also regularizing $\mu^h$ in \eqref{eqpl}, we get
\begin{equation} \label{eqmufambientplreg}
    i_* \mu^f = \left. \frac{\dbar |f_q|^{2\lambda_2}\wedge \dots \wedge \dbar |f_1|^{2\lambda_2} \wedge \dbar |h_r|^{2\lambda_1} \wedge \dots \wedge \dbar |h_1|^{2\lambda_1}}{f_q\dots f_1 h_r \dots h_1} \wedge \eta \right|_{\lambda_1 = 0, \lambda_2 = 0},
\end{equation}
where $\eta = (2\pi i)^{-r} dh_1 \wedge \dots \wedge dh_r$.
Note that $f$ being a complete intersection on $Z$ means that $(f,h)$ is a complete intersection on $\Cn$.
In this case, by results of Samuelsson, \cite{hasamJFA}, the right-hand side of \eqref{eqmufambientplreg} is continuous in
$(\lambda_1,\lambda_2)$ near $(0,0)$.
In particular, we can instead take the analytic continuation where $\lambda_1 = \lambda_2 = \lambda$ to $\lambda=0$,
which equals the Coleff-Herrera product of $(f,h)$, i.e.,
\begin{equation} \label{eqmufambientpl}
    i_* \mu^f = \dbar \frac{1}{f_q}\wedge \dots \wedge \dbar \frac{1}{f_1}\wedge \dbar \frac{1}{h_r} \wedge \dots \wedge \dbar \frac{1}{h_1}\wedge \eta.
\end{equation}

The representation \eqref{eqmufambientpl} of the Coleff-Herrera product will be the basis of proving Proposition~\ref{propciduality}.
First, we consider the case when $q < p$.
Since $Z$ is a reduced complete intersection, $\eta = (2\pi i)^{-r} dh_1 \wedge \dots \wedge dh_r$ is non-vanishing on $Z_\reg$.
Thus, if $g \in \ann \mu^f$, i.e., by considering $g$ in the ambient space, $g i_* \mu^f = 0$,
we get from \eqref{eqmufambientpl} that $g$ annihilates the Coleff-Herrera product $\mu^{(f,h)}$ on $Z_\reg$.
The Coleff-Herrera product belongs to a class of currents called pseudomeromorphic currents, see Section~\ref{sectrescur}.
This class of currents is closed under multiplication with smooth functions, and have the property that if $T$ is
a pseudomeromorphic $(*,k)$-current with support on a variety of codimension $> k$, then $T = 0$, see Proposition~\ref{proppm0}.
Thus, the current $g \mu^{(f,h)}$ is in fact $0$, since it is a $(0,q+r)$-current with support on $Z_\sing$ which has
codimension $p + r$ (in $\Cn$).
By the duality theorem (on $\Cn$), $g \in \mathcal{J}(f,h)$, i.e., $g \in \mathcal{J}(f)$ in $\Ok_Z = \Ok/\mathcal{J}(h)$.
Hence, $Z$ has $q$-duality if $q < p$.

We now consider the case when $q \geq p$. We can find $w$ arbitrarily close to $z$,
and a complete intersection $f = (f_1,\dots,f_q)$ on $(Z,w)$ such that $Z(f) \subseteq Z_\sing$,
see Section~\ref{sectci}, and in particular Lemma~\ref{lmaci3}.
Let $\mathcal{I} = \mathcal{J}(f_1,\dots,f_q)$, and $V = Z(I)$. It follows from the Nullstellensatz
that there exists a holomorphic function $g$ such that $g \notin \mathcal{I}$, but $g \in \mathcal{J}_V$
and $g \mathcal{J}_V \subseteq \mathcal{I}$, see the proof of Proposition~\ref{propcounterex1}
in Section~\ref{sectproofce1}.

We claim the $g$ annihilates $\mu^f$, and since $g \notin \mathcal{J}(f)$, this proves the second part
of Proposition~\ref{propciduality}. To prove this claim, note first that by the implicit function theorem,
$\eta = (2\pi i)^{-r} dh_1 \wedge \dots \wedge dh_r$ vanishes on $Z_\sing$, i.e., if $\eta = \sum_{|I|=r} h_I dz_I$,
then each $h_I \in \mathcal{J}_{Z_\sing}$.
Since $V \subseteq Z_\sing$, we get that $\mathcal{J}_{Z_\sing} \subseteq \mathcal{J}_V$.
Hence, $g h_I \in g \mathcal{J}_{Z_\sing} \subseteq g \mathcal{J}_V \subseteq \mathcal{I}$,
where the last inclusion follows by the choice of $g$.
Thus, we get from multiplying \eqref{eqmufambientpl} by $g$ that $g$ annihilates $\mu^f$, since
each term $g h_I$ from $g \eta$ annihilates the Coleff-Herrera product $\mu^{(f,h)}$.

\section{Residue currents and free resolutions} \label{sectrescur}

When $Z$ is a reduced complete intersection defined by $h$, the Coleff-Herrera product
$\mu^h$ is a natural current associated to $Z$, and in Section~\ref{sectrcicase}, the factorization
of the integration current $[Z]$ in terms of $\mu^h$ was the starting point of the argument.
We want to find a corresponding current $R^Z$ and a factorization of the integration current $[Z]$
also when $Z$ is not a complete intersection, see Theorem~\ref{thmxivanish} below.
To do this, we use a construction by Andersson and Wulcan of currents associated to free resolutions
of ideals, \cite{AW1}.

Let $\mathcal{J}$ be a coherent ideal sheaf, and let $(E,\varphi)$ be a locally free resolution
of the sheaf $\Ok/\mathcal{J}$ as in \eqref{eqfreeres}.
Mostly, we will use the case when $\mathcal{J} = \mathcal{J}_Z$,
the sheaf of holomorphic functions vanishing on the analytic variety $Z$.

In particular, if $Z$ is a reduced complete intersection, and $\mathcal{J}_Z = \mathcal{J}(h_1,\dots,h_p)$, then
the Koszul complex of $h$ is a free resolution of $\Ok/\mathcal{J}_Z$.
In this case, the current associated to the Koszul complex of $h$ equals the Coleff-Herrera product $\mu^h$,
Theorem~\ref{thmbmch}.

To construct the current associated to $E$, one first defines, outside of $Z = Z(\mathcal{J})$, right inverses
$\sigma_k : E_{k-1} \to E_k$ to $\varphi_k$ which are minimal with respect to some metric on $E$, i.e.,
$\varphi_k \sigma_k|_{\Im \varphi_k} = \Id_{\Im \varphi_k}$, $\sigma_k = 0$ on $(\Im \varphi_k)^\perp$,
and $\Im \sigma_k \perp \ker \varphi_k$. One lets
\begin{equation*}
    u = \sigma_1 + \sigma_2\dbar\sigma_1 + \dots + \sigma_N\dbar\sigma_{N-1}\dots\dbar\sigma_1.
\end{equation*}
Then, if $F \not\equiv 0$ is a holomorphic function vanishing at $Z$, $R^E$ is defined by
\begin{equation} \label{eqrlambda}
    R^E = \dbar |F|^{2\lambda} \wedge u |_{\lambda = 0},
\end{equation}
where for $\Re \lambda \gg 0$, this is a (current-valued) analytic function in $\lambda$,
and $|_{\lambda = 0}$ denotes the analytic continuation to $\lambda = 0$.
See \cite{AW1} for more details.

Let $R^E_k$ denote the part of $R^E$ with values in $E_k$, i.e., $R^E_k$ is
a $E_k$-valued $(0,k)$-current. If $Z = Z(\mathcal{J})$, and $\codim Z = p$,
then we will in fact have that
\begin{equation} \label{eqrepN}
    R^E = R^E_p + \dots + R^E_N,
\end{equation}
where $N$ is the length of the free resolution $(E,\varphi)$.

The current $R^E$ has the following crucial property, \cite{AW1}, Theorem~1.1.

\begin{thm} \label{thmreann}
    Let $R^E$ be the current associated to a free resolution $(E,\varphi)$
    of an ideal $\mathcal{J}$.
    Then $\ann R^E = \mathcal{J}$.
\end{thm}

If $Z$ is an analytic subvariety, we will denote by $R^Z$ the current associated
with a free resolution of $\mathcal{J}_Z$ of minimal length. Note that
this current is not in general uniquely defined, as it might depend on the choice of metrics.

In this article, we are only concerned with local (or semi-local) statements,
so the reader may very well assume the vector bundles are in fact free modules.
However, we still keep the notation of vector bundles, partly to keep a consistent notation,
but also since it is advantageous to be able to refer to the specific vector bundle $E_k$
and not just the free module $\Ok^{\oplus r_k}$.

If $f = (f_1,\dots,f_p)$ defines a complete intersection, the Coleff-Herrera product coincides with
the so called \emph{Bochner-Martinelli current} of $f$, as introduced by Passare, Tsikh and Yger in \cite{PTY}
in the smooth case. It was also developed in the case of an analytic variety in \cite{BVY}.
If $f$ defines a complete intersection, the Bochner-Martinelli current of $f$, denoted $R^f$,
can be defined as the current associated with the Koszul complex of $f$.
In fact, in \cite{AW1}, currents associated with any generically exact complex of vector bundles
are defined, and not only free resolutions as described above, and then the Bochner-Martinelli
current for an arbitrary $f$ can be defined as the current associated with the Koszul complex of $f$,
see \cite{A3}.
This equality of the Coleff-Herrera product and the Bochner-Martinelli current makes the Coleff-Herrera
product fit in the framework of residue currents associated with a free resolution, and this
substitution will be used throughout the arguments.
The theorem below is Theorem~4.1 in \cite{PTY} in the smooth case, and Theorem~6.3 in \cite{L}
in the singular case.

\begin{thm} \label{thmbmch}
    If $f = (f_1,\dots,f_p)$ defines a complete intersection on $Z$, then 
    the Bochner-Martinelli current $R^f$ of $f$ equals the Coleff-Herrera product
    $\mu^f$ of $f$.
\end{thm}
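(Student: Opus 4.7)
The plan is to reduce the statement on the (possibly) singular variety $Z$ to the classical smooth case, namely Theorem~4.1 of \cite{PTY}, by means of the Standard Extension Property (SEP). Both $\mu^f$ and $R^f$ are $(0,p)$-currents on $Z$ defined as the value at $\lambda=0$ of a current-valued holomorphic function of $\lambda$: $\mu^f$ via the integral in \eqref{eqchpdef}, and $R^f$ via \eqref{eqrlambda} applied to the Koszul complex of $f$. Both have support in $Z_f$, since the integrand in \eqref{eqchpdef} and the form $u$ in \eqref{eqrlambda} are smooth off $Z_f$.

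First, I would restrict both currents to $\Zreg$, which is a complex manifold on which $f$ still defines a complete intersection. There, $\mu^f|_{\Zreg} = R^f|_{\Zreg}$ is exactly the content of Theorem~4.1 in \cite{PTY}. Second, I would show that both currents have SEP with respect to $\Zsing \cup Z_f$: for $\mu^f$ this is classical, going back to \cite{CH}, and can be read off the analytic-continuation definition \eqref{eqchpdef}; for $R^f$ it follows from the properties of the Andersson-Wulcan current established in \cite{AW1}, together with the fact that the form $u$ for the Koszul complex is smooth outside $Z_f$ and satisfies the standard annihilation relations $\bar f_j R^f = 0$. Once SEP is in hand for both currents, the difference $\mu^f - R^f$ is a $(0,p)$-current on $Z$ which has SEP and vanishes on the dense open set $\Zreg$, and therefore must vanish on all of $Z$.

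The main technical obstacle is establishing SEP for $R^f$ on the singular variety $Z$, as opposed to the smooth ambient setting in which the arguments of \cite{AW1} are most cleanly formulated. The natural way to handle this is to pass to a log-resolution $\pi : \tilde Z \to Z$ on which the pullbacks of the function $F$ in \eqref{eqrlambda} and of holomorphic functions cutting out $\Zsing$ become simultaneously simple-normal-crossing; the analytic continuation of $\dbar|F|^{2\lambda}\wedge u$ then reduces to local one-variable residue computations in monomial coordinates, where the insertion of the cutoffs $\chi(|h|^2/\epsilon)$ needed for SEP can be controlled by hand. Once SEP for $R^f$ is verified on $Z$, uniqueness of the SEP extension of a current from $\Zreg$ delivers the equality $\mu^f = R^f$ on all of $Z$.
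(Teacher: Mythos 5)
The paper does not actually prove this statement; it is quoted, with the smooth case attributed to Theorem~4.1 of \cite{PTY} and the singular case to Theorem~6.3 of \cite{L} (see also \cite{LS}). So your proposal must stand on its own, and it has a genuine gap that the paper's own Example~\ref{ex1} makes concrete. Both $\mu^f$ and $R^f$ are pseudomeromorphic $(0,p)$-currents supported on $Z_f$, which has codimension $p$ in $Z$. The dimension principle (Proposition~\ref{proppm0}) therefore gives the SEP only \emph{with respect to} $Z_f$, i.e., $\mathbf{1}_W T=0$ for subvarieties $W$ of positive codimension \emph{in} $Z_f$. It does not give, and one does not in general have, the SEP with respect to $\Zsing$: in Example~\ref{ex1}, $Z_f=\Zsing=\{0\}$ has codimension exactly $p=k-1$ in $Z$, and $\mu^f$ is a nonzero $(0,p)$-current supported entirely inside $\Zsing$, so for $h$ cutting out $\Zsing$ one has $\chi(|h|^2/\epsilon)\mu^f\equiv 0$ and hence $\lim_\epsilon\chi(|h|^2/\epsilon)\mu^f=0\neq\mu^f$. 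In exactly this situation both restrictions $\mu^f|_{\Zreg}$ and $R^f|_{\Zreg}$ vanish, so Theorem~4.1 of \cite{PTY} applied on $\Zreg$ carries no information, and no version of the SEP can reconstruct a current supported inside $\Zsing$ from its (zero) restriction to $\Zreg$. What your argument actually yields is that $\mu^f-R^f$ is supported on $Z_f\cap\Zsing$; since that set can have codimension exactly $p$, Proposition~\ref{proppm0} does not finish, and the whole difficulty of the theorem is concentrated precisely there.

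The salvageable part of your plan is the piece you relegate to a ``technical obstacle'': passing to a smooth modification $\pi:\tilde Z\to Z$ on which the $\pi^*f_i$ are principal with normal crossings and comparing the two $\lambda$-regularizations, $\dbar|f_p|^{2\lambda}\wedge\cdots\wedge\dbar|f_1|^{2\lambda}/(f_p\cdots f_1)\wedge[Z]$ and $\dbar|f|^{2\lambda}\wedge u\wedge[Z]$, directly upstairs by explicit local computations. That comparison is essentially the published proof, but it must be carried out to establish the equality itself on all of $Z$ (in particular over the components of $Z_f$ lying in $\Zsing$), not merely to verify an SEP that would then be combined with the smooth case.
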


Pseudomeromorphic currents were introduced in \cite{AW2}.
A current of the form
\begin{equation*}
    \frac{1}{z_{i_1}^{k_1}}\cdots\frac{1}{z_{i_m}^{k_m}}\dbar \frac{1}{z_{i_{m+1}}^{k_{m+1}}}\wedge
    \cdots \wedge \dbar \frac{1}{z_{i_p}^{k_p}} \wedge \alpha,
\end{equation*}
where $\alpha$ is a smooth form with compact support, is called an elementary current.
A current $T$ is said to be a \emph{pseudomeromorphic current}, denoted $T \in \PM$, if it is a locally finite
sum of push-forwards of elementary currents under compositions of smooth modifications and open inclusions.
As can be seen from their construction, the Coleff-Herrera product $\mu^f$ and the current $R^E$
associated with a free resolution are pseudomeromorphic. 
We will need the following two properties of pseudomeromorphic currents, see Proposition~2.3 and Corollary~2.4 in \cite{AW2}.

\begin{prop} \label{proppm0}
    If $T \in \PM$ is of bidegree $(0,p)$ and $T$ has support on a variety
    of codimension $\geq p+1$, then $T = 0$.
\end{prop}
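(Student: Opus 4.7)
The plan is to leverage the defining decomposition of a pseudomeromorphic current as a locally finite sum of push-forwards of elementary currents, and to exploit the rigid structure of elementary currents against the support hypothesis. The guiding heuristic is that an elementary current of bidegree $(0,p)$ is naturally supported on a coordinate subspace of codimension exactly $p$, so compressing its support further into codimension $\geq p+1$ should force it to vanish.

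First I would localize. Using a partition of unity, together with the fact that $\PM$ is closed under multiplication by smooth forms, I may assume near any given point that $T = \sum_j (\pi_j)_* \tau_j$ is a finite sum, each $\tau_j$ an elementary current on some $\C^{n_j}$ of the shape
$$\tau_j = \frac{\alpha}{z_{i_1}^{a_1}\cdots z_{i_m}^{a_m}} \wedge \dbar\frac{1}{z_{j_1}^{b_1}} \wedge \cdots \wedge \dbar\frac{1}{z_{j_p}^{b_p}},$$
with $\alpha$ smooth and compactly supported. The support of $\tau_j$ lies in the coordinate subspace $\{z_{j_1} = \cdots = z_{j_p} = 0\}$, which has codimension exactly $p$ in $\C^{n_j}$.

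Next I would argue by induction on $p$. For $p = 0$, a pseudomeromorphic $(0,0)$-current is locally a semi-meromorphic function (a smooth form divided by a monomial), so its support cannot be a proper analytic subset unless it is identically zero. For the inductive step, I would peel off one residue factor using the analytic-continuation definition
$$\dbar\frac{1}{z_{j_p}^{b_p}} = \left.\dbar|z_{j_p}|^{2\lambda}\frac{1}{z_{j_p}^{b_p}}\right|_{\lambda=0},$$
reducing the pairing $T(\varphi)$ to a pairing against a pseudomeromorphic $(0,p-1)$-current effectively living on the slice $\{z_{j_p} = 0\}$, whose support now sits inside $V \cap \{z_{j_p} = 0\}$, still of codimension $\geq p$ in the slice; the inductive hypothesis then yields zero.

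The main obstacle is cancellation: different summands $(\pi_j)_* \tau_j$ could cancel in the push-forward, so individual $\tau_j$ need not vanish even when $T$ does. To handle this cleanly, rather than arguing summand-by-summand I would use the standard extension property of pseudomeromorphic currents, namely that $T$ is determined by its restriction to the complement of any proper subvariety of its support. Stratifying $V$ into smooth pieces and restricting iteratively, I reduce to the case where $T$ is supported on a single smooth submanifold $W$ of codimension $\geq p+1$. In normal coordinates to $W$, any pseudomeromorphic current supported on $W$ requires at least $\codim W$ residue factors $\dbar(1/z^k)$ in normal directions to appear in its elementary decomposition, but a bidegree $(0,p)$ current can carry at most $p$ such factors; since $p < \codim W$, this is impossible, forcing $T = 0$.
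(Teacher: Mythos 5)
The paper does not actually prove this proposition; it quotes it as Corollary~2.4 of [AW2] (the ``dimension principle''). So the relevant comparison is with the standard argument there, whose engine is the following lemma: if $T \in \PM$ and $h$ is a holomorphic function vanishing on $\supp T$, then $\bar h T = 0$ and $d\bar h \wedge T = 0$. Granting this, one works on the regular part of the support, chooses coordinates so that $V = \{w_1 = \dots = w_k = 0\}$ with $k \geq p+1$, observes that the two identities force $T$ to contain the factor $d\bar w_1 \wedge \dots \wedge d\bar w_k$, which is impossible for a current of anti-holomorphic degree $p < k$; then $T$ is supported on $V_{\rm sing}$ and one concludes by induction over the dimension of the support. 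Your last paragraph arrives at essentially this degree count, so the overall shape is right, but the crucial step is asserted rather than proved.

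Concretely, the gap is in the sentence ``any pseudomeromorphic current supported on $W$ requires at least $\codim W$ residue factors $\dbar(1/z^k)$ in normal directions to appear in its elementary decomposition.'' This is precisely the content of the lemma above, and your justification does not establish it: the elementary decomposition consists of push-forwards $(\pi_j)_*\tau_j$ under holomorphic maps from \emph{other} manifolds, so ``normal directions to $W$'' are not visible in the summands, and --- as you yourself note when discussing cancellation --- the individual $\tau_j$ need not be supported over $W$ at all, so no structural conclusion about them can be drawn from $\supp T \subseteq W$. The surrounding steps have related problems: a pseudomeromorphic $(0,0)$-current is a sum of push-forwards of semi-meromorphic currents, not itself semi-meromorphic, so the base case needs the same lemma; the ``peeling off'' induction conflates the slice $\{z_{j_p}=0\}$ upstairs in $\C^{n_j}$ with the variety $V$ downstairs, and the claimed codimension bound on the sliced support is unjustified; and the ``extension property'' as you state it (``$T$ is determined by its restriction to the complement of any proper subvariety of its support'') is false --- a Dirac mass is a counterexample --- the correct tool being that the restriction $\mathbf{1}_W T$ is again pseudomeromorphic and $T - \mathbf{1}_W T$ vanishes near $W^c$. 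To repair the proof you would need to prove the lemma $\bar h T = d\bar h \wedge T = 0$ for $h$ vanishing on $\supp T$, which is where the actual work with the analytic continuation in $\lambda$ and the push-forwards takes place.
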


\begin{prop} \label{proppm1}
    If $T \in \PM$ has support on $Z$, and if $f$ is a holomorphic function
    vanishing on $Z$, then $\bar{f} T = 0$.
\end{prop}

We will use results from \cite{A1}, that one can define products of the currents $R^f$ and $R^Z$,
and that under certain conditions, the annihilator of the product $R^f \wedge R^Z$
equals the sum of the ideals $\mathcal{J}(f) + \mathcal{J}_Z$.
This type of product can be defined more generally for currents $R^E$ and $R^F$
associated with two free resolutions $E$ and $F$. If $R^E$ is defined by
\begin{equation*}
    R^E := \dbar |G|^{2\lambda} \wedge u |_{\lambda = 0},
\end{equation*}
then $R^E \wedge R^F$ can be defined by
\begin{equation*}
    R^E \wedge R^F := \dbar |G|^{2\lambda} \wedge u \wedge R^F |_{\lambda = 0}.
\end{equation*}
\begin{remark}
    If we consider $R^f \wedge R^Z$, where $f = (f_1,\dots,f_p)$ is a strongly holomorphic mapping on $Z$,
    then this depends a priori on the choice of representatives of $f$ in the ambient space.
    We will only need that under certain conditions,
    $\ann R^f \wedge R^Z = \mathcal{J}(f) + \mathcal{J}_Z$, which is independent of the choice of representatives.
    However, one can in fact show that $R^f \wedge R^Z$ does not depend on the choice of representatives,
    essentially due to that $R^Z$ is annihilated by both holomorphic and anti-holomorphic functions vanishing on $Z$.
\end{remark}

If
\begin{equation*}
    0 \to E_n \xrightarrow{\varphi_n} E_{n-1} \to \cdots \xrightarrow{\varphi_1} E_0 \to 0
\end{equation*}
and
\begin{equation*}
    0 \to F_m \xrightarrow{\psi_m} F_{m-1} \to \dots \xrightarrow{\psi_1} F_0 \to 0
\end{equation*}
are two complexes, then one can form the tensor product of the complexes, denoted $(E\otimes F,\varphi\otimes \psi)$,
by letting $ (E\otimes F)_k = \oplus_{i + j = k} E_i \otimes F_j$ and
$ (\varphi\otimes \psi) (\xi \otimes \eta) = \varphi_i \xi \otimes \eta + (-1)^i \xi \otimes \psi_j \eta$
if $\xi \otimes \eta \in E_i \otimes F_j$.

The following theorem, Theorem~4.1 and Remark~8 in \cite{A1}, and its corollary gives conditions for when the annihilator of
$R^E \wedge R^F$ coincides with the sum of the annihilators, and when the tensor product
of two (minimal) free resolutions is a (minimal) free resolution.

\begin{thm} \label{thmanntensprod}
    Let $(E,\varphi)$ and $(F,\psi)$ be free resolutions of ideal sheaves
    $\mathcal{I}$ and $\mathcal{J}$, and let $Z^\mathcal{I}_k$ and $Z^\mathcal{J}_l$ be the associated
    sets where $\varphi_k$ and $\psi_l$ does not have optimal rank.
    If $\codim ( Z^\mathcal{I}_k \cap Z^\mathcal{J}_l ) \geq k + l$ for all $k,l \geq 1$, then
    $\ann R^E \wedge R^F = \mathcal{I} + \mathcal{J}$ and $(E \otimes F,\varphi\otimes \psi)$ is a free
    resolution of $\mathcal{I} + \mathcal{J}$. In addition, if both $E$ and $F$ are minimal free resolutions
    at some point $z$, then the tensor product is a minimal free resolution.
\end{thm}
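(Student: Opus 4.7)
The plan is to split the theorem into three pieces and handle them in order: (a) exactness of the tensor product complex, (b) minimality, and (c) the annihilator identity $\ann R^E \wedge R^F = \mathcal{I} + \mathcal{J}$. Parts (a) and (b) are essentially algebraic, and (c) will follow by comparing $R^E\wedge R^F$ with the residue current that Theorem~\ref{thmrexist} attaches to the tensor product resolution.

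For exactness I would use a double complex spectral sequence. Viewing $(E\otimes F, f\otimes g)$ as the total complex of the bicomplex $E_i \otimes F_j$ with horizontal differential coming from $f$ and vertical from $g$, and filtering by rows, the freeness of $F_j$ makes $E\otimes F_j$ exact except in degree zero (where it is $\Ok/\mathcal{I} \otimes F_j$), so the $E_2$-page is concentrated in a single column with entries $\Tor_i(\Ok/\mathcal{I}, \Ok/\mathcal{J})$. Hence exactness of the total complex is equivalent to the vanishing of these Tor-modules for $i\geq 1$, and the zeroth homology is automatically $\Ok/\mathcal{I} \otimes_{\Ok} \Ok/\mathcal{J} = \Ok/(\mathcal{I}+\mathcal{J})$. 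The codimension hypothesis $\codim(Z^\mathcal{I}_k \cap Z^\mathcal{J}_l) \geq k+l$ is precisely a uniform proper-intersection condition across all degeneracy loci and delivers this Tor vanishing. An alternative local route, avoiding spectral sequences, is the Buchsbaum-Eisenbud acyclicity criterion applied directly to $(E\otimes F, f\otimes g)$: the locus where $(f\otimes g)_k$ fails to have optimal rank is controlled by $\bigcup_{i+j=k}(Z^\mathcal{I}_i \cap Z^\mathcal{J}_j)$, and the hypothesis furnishes codimension at least $k$, which is exactly what Buchsbaum-Eisenbud asks for.

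Minimality is immediate once exactness is proved. If both $E$ and $F$ are minimal at $z$, the entries of the matrices $f_i$ and $g_j$ lie in the maximal ideal $\mathfrak{m}_z$. From the explicit formula $(f\otimes g)(\xi\otimes\eta) = f_i\xi\otimes\eta + (-1)^i\xi\otimes g_j\eta$ for $\xi\otimes\eta \in E_i\otimes F_j$, the same holds for the tensor product differential, so the tensor resolution is minimal.

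The annihilator identity is the main obstacle. One inclusion, $\mathcal{I}+\mathcal{J}\subseteq \ann R^E\wedge R^F$, is quick from the defining formula $R^E\wedge R^F = \dbar|G|^{2\lambda}\wedge u\wedge R^F|_{\lambda=0}$: any $h\in\mathcal{I}$ passes through the analytic continuation and annihilates via $hR^E = 0$, while any $h\in\mathcal{J}$ annihilates $R^F$ directly. For the reverse inclusion, by part (a) together with Theorem~\ref{thmrexist} there exists a current $R^{E\otimes F}$ with $\ann R^{E\otimes F} = \mathcal{I}+\mathcal{J}$, so it suffices to show $\ann R^E\wedge R^F \subseteq \ann R^{E\otimes F}$. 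The natural strategy is to establish a pointwise identity $R^E\wedge R^F = R^{E\otimes F}$ by constructing the auxiliary smooth form $u^{E\otimes F}$ on $\Omega\setminus (Z(\mathcal{I})\cup Z(\mathcal{J}))$ as a controlled combination of $u^E$ and $u^F$ and pushing the defining analytic continuation through. The finishing tool is pseudomeromorphicity: both currents lie in $\PM$, so any discrepancy is a pseudomeromorphic current supported on a subvariety whose codimension, thanks to the hypothesis, is too large for a nonzero $\PM$-current of the relevant bidegree to exist (Proposition~\ref{proppm0}). The genuine technical difficulty is tracking the precise form that $u^{E\otimes F}$ takes under the tensor product and verifying that the comparison is compatible with the $\lambda\to 0$ limit; everything else follows once this identification is in place.
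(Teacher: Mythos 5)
First, a point of comparison: the paper does not actually prove this theorem. Everything except the final sentence is quoted from Theorem~4.1 and Remark~8 of \cite{A1}, and the only argument the paper supplies is the one-line observation that, once the tensor product is known to be a resolution, minimality follows because $\Im f_i\subseteq\mathfrak{m}_z\Ok(E_{i-1})$ and $\Im g_j\subseteq\mathfrak{m}_z\Ok(F_{j-1})$ force $\Im (f\otimes g)_k\subseteq\mathfrak{m}_z\Ok((E\otimes F)_{k-1})$ via the explicit formula for $f\otimes g$. Your minimality paragraph is exactly this argument, so that part is complete and agrees with the paper.

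The rest of your proposal is a plausible outline of how one might reprove the cited result, but the two steps carrying all the weight are asserted rather than established. (i) For exactness, the spectral-sequence reduction to the vanishing of $\Tor_i(\Ok/\mathcal{I},\Ok/\mathcal{J})$ for $i\geq 1$ is correct, but the claim that $\codim(Z^{\mathcal I}_k\cap Z^{\mathcal J}_l)\geq k+l$ ``delivers this Tor vanishing'' is precisely the nontrivial content, and no argument is given. In the Buchsbaum--Eisenbud variant, the containment of the degeneracy locus of $(f\otimes g)_k$ in $\bigcup_{i+j=k}(Z^{\mathcal I}_i\cap Z^{\mathcal J}_j)$ is not obvious (at a point where, say, $f_i$ has optimal rank but $g_j$ does not, one must still control the rank of the total differential), and the first Buchsbaum--Eisenbud condition, additivity of the expected ranks, is not addressed. (ii) For the annihilator, the identification $R^E\wedge R^F=R^{E\otimes F}$ is exactly what Theorem~4.1 of \cite{A1} proves, and you explicitly defer it as ``the genuine technical difficulty''; the support-plus-Proposition~\ref{proppm0} mechanism is the right tool, but without carrying it out the reverse inclusion is not obtained. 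Even the ``easy'' inclusion $\mathcal{I}\subseteq\ann (R^E\wedge R^F)$ is not immediate from the defining formula, since the product regularizes only the $E$-factor: knowing $h\,\dbar|G|^{2\lambda}\wedge u|_{\lambda=0}=0$ does not formally give $h\,\dbar|G|^{2\lambda}\wedge u\wedge R^F|_{\lambda=0}=0$. In short: your proof of the addendum coincides with the paper's, while for the main body the paper defers to \cite{A1}, and your sketch, though pointing in the right direction, leaves the decisive steps unproved.
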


    To be precise, the last statement is not included in \cite{A1}.
    However, if the tensor product is a free resolution, it follows immediately
    from the definition of minimality at some $z$, that $\Im \varphi_k \subseteq \mathfrak{m}_z \Ok(E_{k-1})$
    (where $\mathfrak{m}_z$ denotes the maximal ideal of $\Ok_{\Cn,z}$), that it is minimal.

\begin{cor} \label{corranntensprod}
    If $f = (f_1,\dots,f_p)$ is a reduced complete intersection on $Z$, and $\codim Z_f \cap Z^l \geq p + l$
    for $l \geq 1$, then $\ann R^f \wedge R^Z = \mathcal{J}(f) + \mathcal{J}_Z$, and the tensor product of the Koszul
    complex of $f$ and a free resolution of $\mathcal{J}_Z$ is a free resolution of $\mathcal{J}(f) + \mathcal{J}_Z$.
    In addition, if the free resolution of $\mathcal{J}_Z$ is minimal at some point $z$, 
    then the tensor product is a minimal free resolution.
\end{cor}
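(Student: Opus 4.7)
My plan is to apply Theorem~\ref{thmanntensprod} with $(E,f)$ the Koszul complex of $f=(f_1,\dots,f_p)$, viewed as a free resolution of $\mathcal{I}:=\mathcal{J}(f)$ (a resolution because $f$ is a reduced complete intersection), and $(F,g)$ the given free resolution of $\mathcal{J}:=\mathcal{J}_Z$. The first step will be to read off the degeneracy loci. Each differential of the Koszul complex is contraction by $f$, whose rank degenerates precisely on $Z_f$, so $Z^{\mathcal{I}}_k=Z_f$ for $1\leq k\leq p$ and $Z^{\mathcal{I}}_k=\emptyset$ for $k>p$. Setting $c=\codim Z$ in the ambient space, the intrinsic definition \eqref{eqzkintrinsic} gives $Z^{\mathcal{J}}_l=Z$ for $1\leq l\leq c$ and $Z^{\mathcal{J}}_l=Z^{l-c}$ for $l>c$.

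The next, central step will be to verify the codimension hypothesis $\codim(Z^{\mathcal{I}}_k\cap Z^{\mathcal{J}}_l)\geq k+l$ (in the ambient space) required by Theorem~\ref{thmanntensprod}. The case $k>p$ is vacuous. For $1\leq k\leq p$ and $1\leq l\leq c$, the intersection is $Z_f\cap Z$, which has codimension $c+p$ in the ambient space because $\codim_Z Z_f=p$, while $k+l\leq p+c$. For $1\leq k\leq p$ and $l>c$, the intersection is $Z_f\cap Z^{l-c}$, which by hypothesis has codimension at least $p+(l-c)$ in $Z$, hence at least $c+p+(l-c)=p+l\geq k+l$ in the ambient space. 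Invoking Theorem~\ref{thmanntensprod} then yields both $\ann R^f\wedge R^Z=\mathcal{J}(f)+\mathcal{J}_Z$ and the free resolution statement for the tensor product.

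For minimality, I would observe that the Koszul complex of $f$ is minimal at any point $z\in Z_f$, since the entries of its differentials are then the $f_i\in\mathfrak{m}_z$. So at a point $z$ where the given resolution of $\mathcal{J}_Z$ is also minimal, both factors are minimal, and the final clause of Theorem~\ref{thmanntensprod} gives minimality of the tensor product. The only real obstacle in this proof is the codimension case analysis above, where one must consistently pass between codimension in $Z$ (used in the corollary's hypothesis and in the definition of $Z^l$) and codimension in the ambient space (used in Theorem~\ref{thmanntensprod}).
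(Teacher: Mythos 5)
Your proposal is correct and follows essentially the same route as the paper's proof: identify the singularity subvarieties of the Koszul complex as $Z_f$ for $k\leq p$ (empty for $k>p$), note that the hypothesis of Theorem~\ref{thmanntensprod} is automatic for $l\leq\codim Z$ because $f$ is a complete intersection on $Z$, so that the remaining condition reduces exactly to $\codim Z_f\cap Z^l\geq p+l$, and then invoke the minimality clause since the Koszul complex is minimal along $Z_f$. You merely spell out the ambient-versus-intrinsic codimension bookkeeping more explicitly than the paper does.
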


\begin{proof}
    If $f$ is a complete intersection, then the Koszul complex of $f$ is a minimal free resolution,
    and its associated singularity subvarieties $Z^f_k$ are equal to $Z_f$ for $k \leq p$,
    and empty for $k > p$.
    Since $Z_l = Z$ for $l \leq \codim Z$, the condition $\codim Z_f \cap Z_l \geq p + l$ is
    automatic for $l \leq \codim Z$ since $f$ is a complete intersection on $Z$.
    Thus, the condition $\codim Z^f_k \cap Z_l \geq k + l$ becomes just
    $\codim Z_f \cap Z^l \geq p + l$.
\end{proof}

\section{Proof of Theorem~\ref{annmuf}} \label{sectproofannmuf}

    The inclusion $\mathcal{J}(f_1,\dots,f_p) \subseteq \ann \mu^f$ follows from Theorem~\ref{thmanninclusion}
    (also without the conditions on $Z^k \cap Z_f$), so we only need to prove the reverse inclusion.
    Assume that $Z \subseteq \Omega \subseteq \Cn$ and that $\codim Z = q$.
    Then $i_* \mu^f = \mu^f \wedge [Z]$, where $i : Z \to \Omega$ is the inclusion,
    and by Theorem~\ref{thmbmch}, $\mu^f \wedge [Z] = R^f \wedge [Z]$.
    We will show that $g \in \ann (R^f\wedge [Z])$ implies that $g \in \ann (R^f \wedge R^Z)$
    (which does not hold in general, but does under the conditions of the theorem).
    By $(3.21)$ in \cite{AS}, outside of $Z_\sing$ there exists a smooth $(q,0)$-vector field $\gamma$
    such that $\gamma \imult [Z] = R^Z_q$.
    Then, outside of $Z_\sing$,
    \begin{equation*}
        g R^f \wedge R^Z_q = g R^f \wedge (\gamma \imult [Z]) = \gamma \imult (g R^f \wedge [Z]) =  0.
    \end{equation*}
    Hence $g R^f \wedge R^Z_q$ is a $(0,p + q)$-current with support on $Z_f \cap Z_\sing$,
    so by Proposition~\ref{proppm0}, it is $0$ since $Z_f \cap Z_\sing$ has codimension $\geq p + q + 1$.

    Outside of $Z^{k+1}$, there exists a smooth $\Hom(E_{q + k},E_{q + k + 1})$-valued smooth $(0,1)$-form
    $\alpha_{q + k + 1}$ such that $R^Z_{q + k + 1} = \alpha_{q + k + 1}R^Z_{q + k}$,
    see \cite{AW1}.
    We will prove by induction that
    \begin{equation} \label{ind}
        g R^f \wedge R^Z_{q + k} = 0.
    \end{equation}
    Above we proved this for $k = 0$, so let us assume that it is proved for $k$. Then, outside of $Z^f \cap Z^{k+1}$,
    \begin{equation*}
    g R^f \wedge R^Z_{q + k + 1} = \alpha_{q + k + 1}(g R^f \wedge R^Z_{q + k}) = 0.
    \end{equation*}
    Thus $g R^f \wedge R^Z_{q + k + 1}$ has support on $Z^f \cap Z^{k + 1}$
    which has codimension $\geq p + q + k + 2$, and since it is a pseudomeromorphic current of bidegree $(0,p + q + k + 1)$,
    it is $0$ by Proposition~\ref{proppm0}. Thus we have proven that $g \in \ann(R^f\wedge R^Z)$.
    By Corollary~\ref{corranntensprod}, $\ann (R^f \wedge R^Z) = \mathcal{J}(f) + \mathcal{J}_Z$,
    and hence we get that $g \in \mathcal{J}(f) + \mathcal{J}_Z$.

\section{Complete intersections and choice of coordinates} \label{sectci}

This section contains several lemmas about choices of coordinates and existence
of complete intersections containing a certain variety.
They will be used throughout the rest of the sections.
This first lemma, which is based on the first lemma in Section~5.2.2 in \cite{GrRe},
is the basis for the rest of them.

\begin{lma} \label{lmaci1}
    Assume that $(V,z) \subseteq (Z,z)$, where $(Z,z)$ has pure dimension,
    $V$ has codimension $\geq 1$ in $Z$ and that there exists $f = (f_1,\dots,f_m)$
    such that $(V,z) = (Z,z) \cap \{ f_1 = \dots = f_m = 0 \}$.
    Then there exists a finite union, $E$, of proper linear subspaces of $\C^m$,
    such that $(Z,z) \cap \{ a \cdot f = 0 \}$ has codimension $1$ in $(Z,z)$
    if $a \in \C^m \setminus E$.
\end{lma}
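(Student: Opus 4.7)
The plan is to choose $a$ so that $a \cdot f$ does not vanish identically on any irreducible component of $(Z,z)$, and then invoke Krull's Hauptidealsatz to conclude that its zero locus has codimension exactly one in $(Z,z)$.

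First I would decompose $(Z,z) = (Z_1,z) \cup \dots \cup (Z_N,z)$ into irreducible components. Because $(Z,z)$ has pure dimension and $V$ has codimension at least one in $Z$, no $Z_j$ can be contained in $V$; in particular, for each $j$ at least one of the $f_i$ is nonzero in $\Ok_{Z_j,z}$.

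For each $j$ I would then consider the $\C$-linear map $\phi_j : \C^m \to \Ok_{Z_j,z}$ sending $a \mapsto (a \cdot f)|_{Z_j}$, and set $E_j := \ker \phi_j$. The previous paragraph shows that $\phi_j$ is not identically zero, so $E_j$ is a \emph{proper} linear subspace of $\C^m$. Putting $E := E_1 \cup \dots \cup E_N$, any $a \in \C^m \setminus E$ has the property that $a \cdot f$ is nonzero on every component of $(Z,z)$; hence $\{a \cdot f = 0\}$ contains no component of $(Z,z)$, and so the intersection has codimension at least one.

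The one nontrivial point, and the main potential obstacle, is upgrading this to codimension \emph{exactly} one. For this I would appeal to Krull's Hauptidealsatz: since $\Ok_{Z,z}$ is reduced and pure-dimensional it has no embedded associated primes, so any element that is nonzero on every minimal prime is in fact a non-zero-divisor, and the principal ideal it generates therefore has height exactly one. This yields the required codimension and completes the argument.
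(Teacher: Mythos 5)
Your proof is correct and takes essentially the same approach as the paper: decompose $(Z,z)$ into irreducible components, observe that for each component the set of $a$ for which $a\cdot f$ vanishes identically on it is a proper linear subspace of $\C^m$, and take $E$ to be the (finite) union of these. The only difference is that the paper leaves the final step --- that a non-unit not vanishing identically on an irreducible germ cuts out codimension exactly one --- implicit, while you justify it via the Hauptidealsatz; both are fine.
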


\begin{proof}
    The set $E$ of $a \in \C^m$ such that $(Z,z) \cap \{ a \cdot f = 0 \} = (Z,z)$
    is a linear subspace of $\C^m$, and since $(Z,z) \cap \{ f_1 = \dots = f_m = 0 \}$
    has positive codimension, it must be a proper subspace.
    If $(Z,z)$ is irreducible, there thus exists a proper subspace $E \subseteq \C^m$
    such that $(Z,z) \cap \{ a \cdot f  = 0 \}$ has codimension $1$ in $(Z,z)$ if $a \in \C^m \setminus E$.
    If $(Z,z)$ is reducible, then there exists such subspaces $E_i$ for each irreducible component
    $(Z_i,z)$ of $(Z,z)$, and thus we can take $E = \cup E_i$.
\end{proof}

The following two lemmas are about existence of certain complete intersections containing
a given variety, and their existence are the basis for the counterexamples to
the duality theorem.

\begin{lma} \label{lmaci2}
    Assume that $(V,z) \subseteq (Z,z)$, where $(Z,z)$ has pure dimension,
    $\codim V = p$ in $Z$, and let $f = (f_1,\dots,f_m)$ be such that 
    $(V,z) = (Z,z) \cap \{ f_1 = \dots = f_m = 0 \}$.
    Then there exists $f' = (f_1',\dots,f_p')$, a complete intersection on $Z$,
    such that $(V,z) \subseteq (V',z) := (Z,z) \cap \{ f_1' = \dots = f_p' = 0 \}$,
    where $f_i' = \sum a_{i,j} f_j$.
\end{lma}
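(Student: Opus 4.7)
The plan is to construct $f' = (f_1',\dots,f_p')$ iteratively, peeling off one codimension at a time and invoking Lemma~\ref{lmaci1} at each step. I would argue by induction on $p$, the codimension of $V$ in $Z$.

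The base case $p = 1$ is essentially Lemma~\ref{lmaci1}: since $\codim V \geq 1$ in $Z$, there exists $a^1 \in \C^m$ (outside a finite union of proper linear subspaces) such that $f_1' := a^1 \cdot f$ defines a subvariety of codimension exactly $1$ in $(Z,z)$; because every $f_j$ vanishes on $V$, so does $f_1'$, giving $(V,z) \subseteq (Z,z) \cap \{f_1' = 0\}$.

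For the inductive step, assume the statement for codimension $p-1$. First apply Lemma~\ref{lmaci1} to obtain $f_1' = a^1 \cdot f$ so that $(W,z) := (Z,z) \cap \{f_1' = 0\}$ has codimension exactly $1$ in $(Z,z)$, and note $(V,z) \subseteq (W,z)$ since $f_1'|_V = 0$. To continue the induction I need $(W,z)$ itself to have pure dimension; this is Krull's Hauptidealsatz applied to each irreducible component of $(Z,z)$: since $f_1'$ does not vanish identically on any such component (by the choice of $a^1 \notin E$), $\{f_1' = 0\}$ has pure codimension $1$ in $(Z,z)$. Moreover $(V,z) = (W,z) \cap \{f_1 = \dots = f_m = 0\}$ still holds, and $V$ now has codimension $p-1$ in $W$.

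The induction hypothesis applied to $(V,z) \subseteq (W,z)$ then yields functions $f_2',\dots,f_p'$ of the form $f_i' = \sum_j a_{i,j} f_j$ such that $(W,z) \cap \{f_2' = \dots = f_p' = 0\}$ is a complete intersection of codimension $p-1$ in $(W,z)$ containing $(V,z)$. Combining with $f_1'$, the tuple $f' = (f_1',\dots,f_p')$ defines a complete intersection of codimension $p$ in $(Z,z)$ containing $(V,z)$, which is the desired conclusion. The only delicate point is maintaining pure-dimensionality across the induction, but this is immediate from Krull's principal ideal theorem together with the codimension drop guaranteed by Lemma~\ref{lmaci1} at each stage.
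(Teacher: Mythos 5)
Your proof is correct and takes essentially the same route as the paper: iterated application of Lemma~\ref{lmaci1}, cutting down the codimension by one at each step, with containment of $V$ automatic because each $f_i'$ is a linear combination of functions vanishing on $V$. The only difference is cosmetic — you phrase the iteration as an induction and explicitly justify pure-dimensionality of the intermediate varieties via Krull's Hauptidealsatz, a point the paper leaves implicit.
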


\begin{proof}
    By Lemma~\ref{lmaci1}, there exists $E \subseteq \C^m$
    such that $(Z,z) \cap \{ a \cdot f = 0 \}$ has codimension $1$ in $(Z,z)$ for $a \in \C^m \setminus E$.
    We choose $f_1' = a \cdot f$, for some $a \in \C^m \setminus E$.
    Proceeding in the same way with $(Z,z) \cap \{ f_1' = 0 \}$ instead of $(Z,z)$, we get $f_2'$ such that
    $(Z,z) \cap \{ f_1' = f_2' = 0 \}$ has codimension $2$ in $Z$.
    Repeating this, $f' = (f_1',\dots,f_p')$ will be the desired complete intersection.
\end{proof}

\begin{lma} \label{lmaci3}
    Assume that $(V,z) \subseteq (Z,z)$, where $(V,z)$ has codimension $p$ in $(Z,z)$
    and $\dim (Z,z) = d$.
    Then, for some $w$ arbitrarily close to $z$, there exists a complete intersection
    $f = (f_1,\dots,f_d) \in \Ok_{Z,w}^{\oplus d}$ such that
    $(V,w) = (Z,w) \cap \{ f_1 = \dots = f_p = 0 \}$.
\end{lma}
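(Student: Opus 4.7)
The plan is to combine Lemma~\ref{lmaci2}, which produces the first $p$ functions cutting out $V$ together with possibly some extra components, with iterated use of Lemma~\ref{lmaci1}, which then slices successively down to a point. A small detour through a nearby $w$ is needed because Lemma~\ref{lmaci2} in general produces a $V' \supsetneq V$ of the same codimension, so one cannot in general arrange equality at $z$ itself.

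First I would choose $g_1, \ldots, g_m \in \Ok_{Z,z}$ generating the ideal of $V$ in $\Ok_{Z,z}$, so that $(V,z) = (Z,z) \cap \{g_1 = \cdots = g_m = 0\}$. Lemma~\ref{lmaci2} yields a complete intersection $f_1, \ldots, f_p$ in $(Z,z)$, each $f_i$ a linear combination of the $g_j$, such that $(V,z) \subseteq (V',z) := (Z,z) \cap \{f_1 = \cdots = f_p = 0\}$. Since $V'$ has pure codimension $p$, its irreducible components at $z$ split into those coinciding with components of $(V,z)$ and the remaining ``extra'' ones, whose union I denote by $W$. If $p = d$, then $(V',z)$ is $0$-dimensional and therefore equals $\{z\} = (V,z)$, so one may take $w = z$. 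Otherwise, each component of $(V,z)$ and each extra component of $(V',z)$ are distinct irreducible subvarieties of common dimension $d - p$, so their pairwise intersections are proper analytic subsets of the former; hence $V \setminus W$ accumulates at $z$, and I pick $w \in V \setminus W$ arbitrarily close to $z$. At such a $w$, only components of $V$ pass through $w$, so $(V,w) = (V',w)$.

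Next I would extend $(f_1, \ldots, f_p)$ to a length-$d$ complete intersection in $(Z,w)$. Setting $V_p := (V',w)$, of pure dimension $d - p$, and given inductively $V_k := (Z,w) \cap \{f_1 = \cdots = f_k = 0\}$ of pure dimension $d - k \geq 1$ for some $k$ with $p \leq k < d$, I would apply Lemma~\ref{lmaci1} with $(Z,z)$ replaced by $(V_k,w)$ and $(V,z)$ replaced by $\{w\}$, using the coordinates $z_1 - w_1, \ldots, z_n - w_n$ as generators of the maximal ideal of $w$ in $V_k$. This produces a linear form $f_{k+1} = \sum_j a_j(z_j - w_j)$ for which $V_{k+1} := V_k \cap \{f_{k+1} = 0\}$ has codimension $1$ in $V_k$, hence pure codimension $k + 1$ in $(Z,w)$. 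After $d - p$ iterations, $V_d$ is $0$-dimensional and contains $w$, so $V_d = \{w\}$ as a germ, and $(f_1, \ldots, f_d)$ is the desired complete intersection with $(V,w) = (Z,w) \cap \{f_1 = \cdots = f_p = 0\}$.

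I expect the only substantive obstacle to be verifying that $w$ can indeed be chosen so as to eliminate the extra components of $V'$; once this is granted, the iterated extension is routine and relies only on the fact that complete intersections in a pure-dimensional germ are themselves pure-dimensional, together with direct invocations of Lemma~\ref{lmaci1}.
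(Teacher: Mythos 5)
Your proposal is correct and follows essentially the same route as the paper: apply Lemma~\ref{lmaci2} to get $f_1,\dots,f_p$ with $(V,z)\subseteq (V',z)$, move to a nearby generic point $w$ of $V$ where the extra components of $V'$ disappear so that $(V,w)=(V',w)$, and then cut $(V,w)$ down to $\{w\}$ by repeated application of Lemma~\ref{lmaci1} (the paper packages this last step as a second application of Lemma~\ref{lmaci2} to $(\{w\},w)\subseteq (V,w)$, which is the same iteration). The only differences are cosmetic: you justify the choice of $w$ via the component decomposition of $V'$ rather than via the locus where $V'$ is reducible, which is an equally valid argument.
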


\begin{proof}
    By Lemma~\ref{lmaci2}, there exists $f = (f_1,\dots,f_p)$ a complete intersection
    on $(Z,z)$ such that $(V,z) \subseteq (V',z)$, where $V' = \{ f_1=\dots=f_p = 0 \}$.
    Since the set where $V'$ is reducible has codimension $> p$,
    there exists some $w$ arbitrarily close to $z$ such that $(V,w) = (V',w)$.
    Then we apply Lemma~\ref{lmaci2} again to $(\{ w \},w) \subseteq (V,w)$ to find
    $(f_{p+1},\dots,f_d)$, a complete intersection on $(V,w)$, so that 
    $f = (f_1,\dots,f_d)$ is the desired complete intersection.
\end{proof}

This last lemma is about the existence of a certain choice of coordinates, which is used
in the proof of Theorem~\ref{thmxivanish}.

\begin{lma} \label{lmaci4}
    Let $(Z,0) \subseteq (\Cn,0)$ and assume that $Z$ has pure dimension $d$.
    Then we can choose coordinates $w$ on $\Cn$ such that $(Z,0) \cap \{ w_I = 0 \} = \{ 0 \}$
    for all $I \subseteq \{ 1,\dots, n \}$ with $|I| = d$.
\end{lma}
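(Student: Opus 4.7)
The plan is a standard generic-position argument. A choice of linear coordinates $w = Az$ on $\Cn$ amounts to an element $A \in GL_n(\C)$, under which the subspace $\{w_i = 0 : i \in I\}$ becomes $L_I(A) := \{z \in \Cn : (Az)_i = 0 \text{ for all } i \in I\}$, a linear subspace of codimension $d$. For each of the finitely many subsets $I \subseteq \{1, \ldots, n\}$ with $|I| = d$, set
\[
    U_I := \{A \in GL_n(\C) : (Z,0) \cap L_I(A) = \{0\} \text{ as germs at } 0\}.
\]
The strategy is to show that each $U_I$ has proper analytic complement in $GL_n(\C)$, so that $\bigcap_I U_I$ is nonempty by irreducibility of $GL_n(\C)$, and any such $A$ yields coordinates $w = Az$ with the desired property.

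Non-emptiness of $U_I$ reduces to producing a single codimension-$d$ linear subspace $L \subseteq \Cn$ with $(Z,0) \cap L = \{0\}$, since the map $A \mapsto L_I(A)$ is surjective onto the Grassmannian of codimension-$d$ subspaces of $\Cn$. Such an $L$ is supplied directly by Lemma~\ref{lmaci2} applied to $(\{0\}, 0) \subseteq (Z, 0)$ with generators $f = (z_1, \ldots, z_n)$ of the maximal ideal: the lemma produces a complete intersection $f' = (f_1', \ldots, f_d')$ of linear forms on $Z$ whose common zero set has codimension $d$ in $Z$, hence reduces to $\{0\}$ as germs since $\dim Z = d$, so one may take $L = \{f_1' = \cdots = f_d' = 0\}$. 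That the complement of $U_I$ is analytic in $GL_n(\C)$ follows from upper semi-continuity of fiber dimension: choosing a small representative $Z \subseteq B \subseteq \Cn$ of the germ, the incidence variety
\[
    Y_I = \{(A, z) \in GL_n(\C) \times B : z \in Z,\ (Az)_i = 0 \text{ for all } i \in I\}
\]
is analytic, and $A \notin U_I$ precisely when the fiber $Z \cap L_I(A) \cap B$ of the projection $Y_I \to GL_n(\C)$ has positive dimension at the origin.

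Since $GL_n(\C)$ is irreducible, every proper analytic subset is nowhere dense, so the finite union $\bigcup_I (GL_n(\C) \setminus U_I)$ is still a proper subset and $\bigcap_I U_I$ is (dense and) nonempty. The main conceptual ingredient is the analyticity of $GL_n(\C) \setminus U_I$ via upper semi-continuity of fiber dimension; once this is accepted, everything else reduces to the existence of one good projection via Lemma~\ref{lmaci2} together with the irreducibility of $GL_n(\C)$.
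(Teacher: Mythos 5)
Your proof is correct, but it takes a genuinely different route from the paper. The paper argues by a greedy induction: it picks the linear coordinates $w_1,w_2,\dots$ one at a time, and at each step applies Lemma~\ref{lmaci1} to every intersection $(Z,0)\cap\{w_I=0\}$ already constructed (with $|I|\leq d-1$) to get finitely many exceptional unions $E_I$ of proper linear subspaces of the coefficient space; choosing $a$ outside $\bigcup E_I$ makes $w_{k+1}=a\cdot z$ cut each of these intersections down by exactly one dimension, so that in the end $(Z,0)\cap\{w_I=0\}$ has codimension $|I|$ for every $I$ with $|I|\leq d$. Your argument instead works globally on $GL_n(\C)$: nonemptiness of each $U_I$ comes from Lemma~\ref{lmaci2} (and note, for completeness, that the resulting linear forms $f_1',\dots,f_d'$ are automatically independent, since otherwise $L$ would have codimension $<d$ and $\dim_0(Z\cap L)\geq d-\codim L>0$), while closedness of the bad locus is delegated to the semicontinuity of fiber dimension for the projection of the incidence variety $Y_I$, restricted along the section $A\mapsto(A,0)$. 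Both are valid; the trade-off is that your version invokes the (nontrivial) analyticity of $\{x:\dim_x\pi^{-1}(\pi(x))\geq k\}$ from Remmert's theory but in return gives that the good set of coordinate changes is dense in $GL_n(\C)$ and handles all $I$ with $|I|=d$ in one stroke, whereas the paper's version needs only the elementary Lemma~\ref{lmaci1} and stays entirely within the framework already set up, at the cost of a more bookkeeping-heavy induction.
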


\begin{proof}
    We will choose the coordinates $w$ on $\Cn$ inductively.
    By \linebreak Lemma~\ref{lmaci1}, there exists $E$ such that $(Z,0) \cap \{ a \cdot z = 0 \}$
    has codimension $1$ in $Z$ if $a \notin E$, and we choose $w_1 = a \cdot z$
    for some $a \notin E$.
    Now, we assume by induction that we have chosen coordinates $(w_1,\dots,w_k)$
    such that $(Z,0) \cap \{ w_I = 0 \}$ has codimension $|I|$ for each $I \subseteq \{ 1,\dots, k\}$
    with $|I| \leq d$.
    For each $I \subseteq \{1,\dots,k\}$ with $|I| \leq d-1$, we can then find $E_I$ by Lemma~\ref{lmaci1}
    such that $(Z,0) \cap \{ w_I = 0 \} \cap \{ a \cdot z = 0 \}$ has codimension $1$ in $(Z,0) \cap \{ w_I = 0 \}$
    if $a \notin E_I$. Since each $E_I$ is a finite union of proper subspaces of $\Cn$, we can find
    $a \in \Cn \setminus \cup E_I$, and we then let $w_{k+1} = a \cdot z$.
    Proceeding in this way, $w = (w_1,\dots,w_n)$ will be the desired choice of coordinates.
\end{proof}

\section{Representations of the integration current in the Cohen-Macaulay case} \label{sectreprintcurrent}

To prove Proposition~\ref{propcounterex1}, we will use
the following representation of the integration current $[Z]$ on $Z$
in terms of the current $R^Z$.
Assume that $Z$ is Cohen-Macaulay, and that $\codim Z = p$, so that $R^Z = R^Z_p$ by \eqref{eqrepN}.
By Example 1, \cite{A2}, there exist holomorphic $(p,0)$-forms $\xi_i$ such that
\begin{equation} \label{eqZrepr1}
    [Z] = \sum \xi_i \wedge R^Z_{p,i},
\end{equation}
where $R^Z_{p,i}$ are the various components of $R^Z$, i.e., given a local frame $(e_1,\dots,e_N)$
of $\Ok(E_p)$, $R^Z_p = \sum R^Z_{p,i} e_i$.

If $Z$ is a reduced complete intersection defined by $f = (f_1,\dots,f_p)$, 
then $R^Z = \mu^f$ by Theorem~\ref{thmbmch}, and by the Poincar\'e-Lelong formula, see \cite{CH},
we have
\begin{equation*}
    [Z] = \frac{1}{(2\pi i)^p}\dbar\frac{1}{f_p}\wedge \dots \wedge \dbar \frac{1}{f_1} \wedge df_1\wedge \dots \wedge df_p.
\end{equation*}
Thus, we can take $\xi = df_1 \wedge \dots \wedge df_p$,
and then it is clear by the implicit function theorem that $\xi$ vanishes at $Z_\sing$.
We will show that this is the case also when $Z$ is Cohen-Macaulay.
This is Theorem~\ref{thmxivanish}, and the proof will use the following lemmas.
Recall that the \emph{socle} of module $M$ over a local ring $(R,\mathfrak{m},k)$
is defined as $\Hom_R(k,M)$, see \cite{BH}.
We will use the following characterization of the socle, which is immediate
from the definition:
\begin{equation} \label{eqhomomoq}
    \Hom_R(k,M) \cong \{ \alpha \in M \ |\ \mathfrak{m} \alpha = 0 \}.
\end{equation}

\begin{lma} \label{lmadimsoclerank}
    Let $\mathfrak{q}$ be a germ of an ideal at $0$ such that $\sqrt{\mathfrak{q}} = \mathfrak{m}$, where $\mathfrak{m}$
    is the maximal ideal at $0$, and let
    \begin{equation} \label{eqfreeresq}
        0 \to \Ok(E_n) \xrightarrow{\varphi_n} \dots \xrightarrow{\varphi_1} \Ok(E_0) \to \Ok/\mathfrak{q} \to 0
    \end{equation}
    be a minimal free resolution of $\Ok/\mathfrak{q}$, where $\Ok = \Ok_{\Cn,0}$. Then
    \begin{equation*}
        \dim_\C \Hom_\Ok(\Ok/\mathfrak{m},\Ok/\mathfrak{q}) = \rank E_n.
    \end{equation*}
\end{lma}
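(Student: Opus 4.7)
The plan is to compute $\Hom_\Ok(\Ok/\mathfrak{m},\Ok/\mathfrak{q})$ by identifying it with $\Tor^\Ok_n(\Ok/\mathfrak{m},\Ok/\mathfrak{q})$ via the self-duality of the Koszul complex, and then reading this Tor group off the given minimal resolution \eqref{eqfreeresq}.

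\textbf{Setting up.} Since $\sqrt{\mathfrak{q}}=\mathfrak{m}$, the module $M := \Ok/\mathfrak{q}$ is Artinian, hence has depth $0$; since $\Ok=\Ok_{\Cn,0}$ is regular of dimension $n$, the Auslander--Buchsbaum formula gives $\hd_\Ok(M)=n$, which confirms that the minimal free resolution really has length exactly $n$ as written in \eqref{eqfreeresq}.

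\textbf{Step 1 (reading Tor off the minimal resolution).} Tensor \eqref{eqfreeresq} with $\Ok/\mathfrak{m}$. By minimality, $\Im f_k\subseteq \mathfrak{m}\,\Ok(E_{k-1})$, so each induced differential vanishes, and hence
\[
\Tor^\Ok_k(\Ok/\mathfrak{m},M)\cong E_k\otimes_\Ok(\Ok/\mathfrak{m}),
\]
which is a $\C$-vector space of dimension $\rank E_k$. In particular the top Tor has dimension $\rank E_n$.

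\textbf{Step 2 (Koszul self-duality).} Compute the same Tor group differently, using the Koszul complex $K_\bullet:=K_\bullet(z_1,\dots,z_n;\Ok)$ as a minimal free resolution of $\Ok/\mathfrak{m}$. The perfect pairing $\wedge^k\Ok^n\times\wedge^{n-k}\Ok^n\to\wedge^n\Ok^n\cong\Ok$ yields a self-duality $\Hom_\Ok(K_k,\Ok)\cong K_{n-k}$, which identifies the complex $\Hom_\Ok(K_\bullet,M)$ with $K_\bullet\otimes_\Ok M$ up to a shift by $n$. Taking (co)homology gives the standard isomorphism
\[
\Ext^k_\Ok(\Ok/\mathfrak{m},M)\cong \Tor^\Ok_{n-k}(\Ok/\mathfrak{m},M),
\]
and specializing to $k=0$ yields $\Hom_\Ok(\Ok/\mathfrak{m},M)\cong \Tor^\Ok_n(\Ok/\mathfrak{m},M)$. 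Combining with Step~1 finishes the argument.

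The only delicate point is the Koszul self-duality identification and the associated degree shift by $n$; this is classical (see \cite{BH}) and I would either invoke it as a black box or spell it out in a short paragraph by writing both complexes explicitly in terms of exterior powers and checking that the differentials match under the pairing. Everything else is bookkeeping with the Auslander--Buchsbaum formula and the definition of minimality.
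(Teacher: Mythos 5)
Your proposal is correct and follows essentially the same route as the paper: compute $\Tor_n^\Ok(\Ok/\mathfrak{m},\Ok/\mathfrak{q})$ once from the minimal resolution (getting $\rank E_n$ since minimality kills the induced differentials) and once from the Koszul complex of $(z_1,\dots,z_n)$, identifying the top homology with the socle. The paper does your Step 2 concretely, observing that the top homology of $K_\bullet\otimes\Ok/\mathfrak{q}$ is $\Ker(\wedge^n\Ok/\mathfrak{q}\xrightarrow{\delta_z}\wedge^{n-1}\Ok/\mathfrak{q})=\{\varphi:\mathfrak{m}\varphi=0\}$, which is exactly the self-duality computation you defer to \cite{BH}.
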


\begin{proof}
    We have
    \begin{equation*}
        \rank E_n = \dim \Tor_n(\Ok/\mathfrak{m},\Ok/\mathfrak{q})
    \end{equation*}
    since $\Tor_n(\Ok/\mathfrak{m},\Ok/\mathfrak{q})$ is just the $n$:th homology of the complex \eqref{eqfreeresq}
    tensored with $\Ok/\mathfrak{m}$. This is $\C^{\rank E_n}$ since the free resolution is minimal so that if
    \begin{equation*}
        \tilde{\varphi}_n : \Ok(E_n) \otimes \Ok/\mathfrak{m} \to \Ok(E_{n-1}) \otimes \Ok/\mathfrak{m},
    \end{equation*}
    then $\tilde{\varphi}_n = 0$ since $\Im \varphi_n \subseteq \mathfrak{m} E_{n-1}$ by definition of minimality of a free resolution.
    However, $\Tor_n(\Ok/\mathfrak{m},\Ok/\mathfrak{q})$
    can also be computed by taking a free resolution of $\Ok/\mathfrak{m}$, tensoring it with $\Ok/\mathfrak{q}$ and 
    taking homology. Since the Koszul complex of $(z_1,\dots,z_n)$ is a free resolution of $\Ok/\mathfrak{m}$, we get
    \begin{align*}
        \Tor_n(\Ok/\mathfrak{m},\Ok/\mathfrak{q}) \cong \Ker\left(\bigwedge^n \Ok/\mathfrak{q} \xrightarrow{\delta_z} \bigwedge^{n-1} \Ok/\mathfrak{q}\right) \\
        \cong \{ \alpha \in \Ok/\mathfrak{q}\ |\ \mathfrak{m}\alpha = 0 \} \cong \Hom_\Ok(\Ok/\mathfrak{m},\Ok/\mathfrak{q}),
    \end{align*}
    where the last equality is \eqref{eqhomomoq}.
\end{proof}

\begin{lma} \label{lmagensofideal}
    Assume that there exist pseudomeromorphic currents $\mu_1,\dots,\mu_N$ such that $\mathfrak{q} = \cap \ann \mu_i$,
    where $\mathfrak{q}$ is an ideal such that $\sqrt{\mathfrak{q}} = \mathfrak{m}$.
    Then
    \begin{equation*}
        N \geq \dim_\C \Hom_\Ok(\Ok/\mathfrak{m},\Ok/\mathfrak{q}).
    \end{equation*}
\end{lma}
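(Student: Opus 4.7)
My plan is to prove the lemma by a socle-injection argument based on the hypothesis $\mathfrak{q} = \bigcap_i \ann \mu_i$. This hypothesis is equivalent to the statement that the $\Ok$-linear map
\[
\Phi \colon \Ok/\mathfrak{q} \longrightarrow \bigoplus_{i=1}^N \Ok\mu_i, \qquad [g] \longmapsto (g\mu_1,\dots,g\mu_N),
\]
is well-defined and injective; each summand $\Ok\mu_i$ is the cyclic $\Ok$-submodule of currents generated by $\mu_i$, canonically isomorphic to $\Ok/\ann\mu_i$. Applying the left-exact functor $\Hom_\Ok(\Ok/\mathfrak{m},-)$, which by \eqref{eqhomomoq} computes the $\mathfrak{m}$-socle, to $\Phi$ would then produce an injection
\[
\Hom_\Ok(\Ok/\mathfrak{m},\Ok/\mathfrak{q}) \hookrightarrow \bigoplus_{i=1}^N \Hom_\Ok(\Ok/\mathfrak{m},\Ok\mu_i).
\]
The lemma will follow once I establish the single-current bound $\dim_\C \Hom_\Ok(\Ok/\mathfrak{m},\Ok\mu_i) \leq 1$ for every $i$, since then the right-hand side of this display has total dimension at most $N$.

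To verify the single-current bound I would argue as follows. By \eqref{eqhomomoq}, the space $\Hom_\Ok(\Ok/\mathfrak{m},\Ok\mu_i)$ is identified with the set of currents of the form $T = h\mu_i$ (with $h \in \Ok$) satisfying $z_j T = 0$ for every coordinate $z_j$. Such a $T$ is then supported at the origin and is annihilated by the entire maximal ideal, so it lies in the socle of the ambient $\Ok$-module of currents at $0$. The key structural fact is that this socle is one-dimensional: in its concrete realization via Macaulay's inverse system, the relevant ambient module is the injective hull $E$ of $k = \Ok/\mathfrak{m}$, whose socle is spanned (up to scalar) by the delta-type top-order current of the bidegree of $\mu_i$. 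Consequently the cyclic submodule $\Ok\mu_i$ inherits an $\mathfrak{m}$-socle of $\C$-dimension at most $1$, and combining this with the socle injection above yields the desired inequality $N \geq \dim_\C \Hom_\Ok(\Ok/\mathfrak{m},\Ok/\mathfrak{q})$.

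The technical crux I expect to be the most delicate point is precisely the one-dimensional socle bound for $\Ok\mu_i$: this is not automatic for an arbitrary cyclic submodule of currents in mixed bidegree, and must be extracted from the specific residue/Coleff-Herrera nature of the currents used here (so that the ambient module of currents is a single copy of $E$ rather than a direct sum of copies). Once that structural socle bound is in hand, the remainder of the proof is a purely formal manipulation of the left-exact socle functor applied to the defining injection $\Phi$.
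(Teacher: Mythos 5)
Your proof is, at its core, the same as the paper's: both arguments inject the socle $\Hom_\Ok(\Ok/\mathfrak{m},\Ok/\mathfrak{q})$ into $\C^N$ by letting a socle element $\varphi$ act on the currents $\mu_i$, with injectivity coming from $\bigcap_i \ann \mu_i = \mathfrak{q}$, and both reduce the count to the claim that each $\varphi\mu_i$ lies in a fixed one-dimensional space. The paper closes that last step concretely: since $\mathfrak{m}\varphi \subseteq \mathfrak{q} \subseteq \ann\mu_i$, the current $\varphi\mu_i$ is killed by every coordinate function, hence supported at the origin, and being of order zero it equals $a_i R_0$ with $R_0 = \delta_{z=0}\, d\bar z$; the map $\varphi \mapsto (a_1,\dots,a_N)$ is then the desired injection. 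Your left-exact-functor packaging of the same map is fine and adds nothing problematic.

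The genuine issue is the step you yourself flag and leave open: the bound $\dim_\C \Hom_\Ok(\Ok/\mathfrak{m},\Ok\mu_i) \leq 1$. Your suspicion that this requires the specific nature of the currents is correct, and in fact the lemma as literally stated, for arbitrary currents, is false: in $\C^2$ the single distribution $\mu = \partial_{z_1}\delta_0 + \partial_{\bar z_1}\partial_{z_2}\delta_0$ has $\ann_\Ok \mu = \mathfrak{m}^2$ (a direct computation on $h\phi$ with $h$ holomorphic shows $h\mu = 0$ forces exactly $h(0) = \partial_{z_1}h(0) = \partial_{z_2}h(0) = 0$), so $N = 1$ while $\dim_\C \Hom_\Ok(\Ok/\mathfrak{m},\Ok/\mathfrak{m}^2) = 2$. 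The module of all currents supported at the origin is an infinite direct sum of copies of the injective hull $E$, indexed by antiholomorphic derivatives of $\delta_0$, so a cyclic submodule can have socle of dimension greater than one; your parenthetical ``a single copy of $E$'' is precisely what fails in general. What rescues the application is that the $\mu_i$ there are pseudomeromorphic of bidegree $(0,n)$: once $\varphi\mu_i$ is supported at $\{0\}$, the structure of such currents forces annihilation by the conjugate coordinates as well, which kills all antiholomorphic derivatives and yields $\varphi\mu_i = a_i R_0$ — this is what the paper's ``order zero'' assertion encodes. So as written your argument is the paper's argument with the decisive step missing; to complete it you must either build a pseudomeromorphy (or order-zero) hypothesis on the $\mu_i$ into the statement or prove the one-dimensionality from the residue-current structure.
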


\begin{proof}
    We claim that there exists a $\C$-linear injective mapping
    \begin{equation*}
        \tilde{\mu} : \Hom_\Ok(\Ok/\mathfrak{m},\Ok/\mathfrak{q}) \to \C^N,
    \end{equation*}
    which proves the statement.
    We consider $\Hom_\Ok(\Ok/\mathfrak{m},\Ok/\mathfrak{q})$ as \eqref{eqhomomoq}.
    Since $\mathfrak{q} \subseteq \ann \mu_i$, the mapping $\alpha \mapsto \alpha \mu_i, \alpha \in \Hom_\Ok(\Ok/\mathfrak{m},\Ok/\mathfrak{q})$
    is well-defined. Since $\mathfrak{m}\alpha = 0$, and $\overline{\mathfrak{m}}\mu_i = 0$ by Proposition~\ref{proppm1},
    $\alpha\mu_i$ is a current of order $0$ with support on $\{ 0 \}$. Thus
    \begin{equation} \label{eqphimui}
        \alpha \mu_i = a_i R_0,
    \end{equation}
    for some $a_i \in \C$, where $R_0$ is the current $\delta_{z = 0} d\bar{z}$,
    that is, $R_0 . \alpha dz = \alpha(0)$.
    We thus get a mapping
    \begin{equation*}
        \tilde{\mu}(\alpha) = (a_1,\dots,a_N),
    \end{equation*}
    where $a_i$ are defined by \eqref{eqphimui}.
    It only remains to see that $\tilde{\mu}$ is injective. However, if $\tilde{\mu}(\alpha) = 0$,
    then $\alpha \in \cap \ann \mu_i = \mathfrak{q}$, so $\alpha = 0$ in $\Ok/\mathfrak{q}$.
\end{proof}

Combining Lemma~\ref{lmadimsoclerank} and Lemma~\ref{lmagensofideal}, if $f$ is a complete intersection
on $Z$, where $Z$ is Cohen-Macaulay, then none of the components in the decomposition $R^f \wedge R^Z = \sum R^f \wedge R^Z_{p,i}$
are redundant. This will be a crucial step in the proof of the following theorem.

\begin{thm} \label{thmxivanish}
    Let $Z \subseteq \Omega \subseteq \Cn$ be a subvariety of $\Omega$ of codimension $p$, and assume that $Z$ is
    Cohen-Macaulay. Then there exists holomorphic $(p,0)$-forms $\xi_i$ such that
    \begin{equation*}
        [Z] = \sum \xi_i \wedge R^Z_{p,i},
    \end{equation*}
    and if $R^Z$ is defined with respect to a minimal free resolution of $\Ok_Z$, then 
    all $\xi_i$ vanish at $Z_\sing$.
\end{thm}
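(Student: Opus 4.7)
The plan is to take the existence of the $\xi_i$ from Example~1 in \cite{A2} and use the tools of the earlier sections to show that, when $R^Z$ comes from a minimal free resolution, each $\xi_i$ must vanish on $\Zsing$. One fixes $z_0 \in \Zsing$ and, after translation, assumes $z_0 = 0$. By Lemma~\ref{lmaci4} one chooses coordinates $w$ on $\Cn$ so that $(Z,0) \cap \{w_I = 0\} = \{0\}$ for every $d$-subset $I \subseteq \{1,\dots,n\}$, where $d = \dim Z$. For each such $I$, with $J := I^c$ and $f_I := (w_i)_{i \in I}$, the tuple $f_I$ is a complete intersection on $Z$ at $0$ with $Z_{f_I} \cap Z = \{0\}$, and the $w_i$, $i \in I$, form an ambient regular sequence.

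Since $Z$ is Cohen-Macaulay, $Z^k = \emptyset$ for $k \geq 1$, so Corollary~\ref{corranntensprod} applies to $f_I$: writing $\mathfrak{q}_I := \mathcal{J}(f_I) + \mathcal{J}_Z$, one has $\ann(R^{f_I} \wedge R^Z) = \mathfrak{q}_I$, and the tensor product of the Koszul complex of $f_I$ with the minimal free resolution of $\Ok/\mathcal{J}_Z$ is a minimal free resolution of $\Ok/\mathfrak{q}_I$ whose top module has rank $N := \rank F_p$. Combining Lemma~\ref{lmadimsoclerank} and Lemma~\ref{lmagensofideal}, as in the discussion preceding the theorem, one then uses that for $\chi$ representing an element of the socle $\Hom_\Ok(\Ok/\mathfrak{m}, \Ok/\mathfrak{q}_I)$, the scalars $a_{I,\alpha}(\chi)$ defined by $\chi R^{f_I} \wedge R^Z_{p,\alpha} = a_{I,\alpha}(\chi) R_0$ form a basis of the dual of that socle.

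The core step is to multiply the representation $[Z] = \sum_\alpha \xi_\alpha \wedge R^Z_{p,\alpha}$ by $R^{f_I}$, expand $\xi_\alpha = \sum_K \xi_{\alpha,K}\, dw_K$ in the chosen coordinates, and test against $\chi\, dw_I$ for $\chi$ a socle element. On the right-hand side only the index $K = J$ contributes, since otherwise $dw_K \wedge dw_I$ has a repeated index; invoking the congruence $\chi\,\xi_{\alpha,J} \equiv \xi_{\alpha,J}(0)\,\chi \pmod{\mathfrak{q}_I}$, which holds because $\xi_{\alpha,J} - \xi_{\alpha,J}(0) \in \mathfrak{m}$ and $\mathfrak{m}\chi \subseteq \mathfrak{q}_I$, the right-hand side collapses to $\pm \sum_\alpha \xi_{\alpha,J}(0)\, a_{I,\alpha}(\chi)$. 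The left-hand side $(\mu^{f_I} \wedge [Z])(\chi\, dw_I)$ is evaluated via the Coleff-Herrera tube integral over $T_\epsilon = Z \cap \{|w_i| = \epsilon, i \in I\} \subset \Zreg$, parametrizing by the finite surjective projection $\pi_I : Z \to \C^d_I$ whose degree at $0$ we call $m_I$; this yields $m_I \chi(0)$. Because $Z$ is reduced and singular at $0$, the Hilbert-Samuel multiplicity satisfies $\dim_\C \Ok/\mathfrak{q}_I \geq e(Z,0) \geq 2$, so $\Ok/\mathfrak{q}_I \neq \C$ and every socle representative has $\chi(0) = 0$; the left-hand side therefore vanishes.

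Putting the two sides together gives $\sum_\alpha \xi_{\alpha,J}(0)\, a_{I,\alpha}(\chi) = 0$ for every socle element $\chi$, and the basis property forces $\xi_{\alpha,J}(0) = 0$ for every $\alpha$. As $I$ varies over all $d$-subsets, $J = I^c$ varies over all $p$-subsets, so every coefficient of $\xi_\alpha$ vanishes at $0$ and $\xi_\alpha(0) = 0$ as a $(p,0)$-form; since $z_0 \in \Zsing$ was arbitrary, $\xi_\alpha$ vanishes on $\Zsing$. The hard part will be the tube-integral evaluation $(\mu^{f_I} \wedge [Z])(\chi\, dw_I) = m_I \chi(0)$ at the singular point $0$, which relies on $T_\epsilon$ lying in $\Zreg$ for small $\epsilon$ (possible since $\Zsing$ has positive codimension in $Z$) and a careful branch-by-branch analysis of $\pi_I$ near $0$.
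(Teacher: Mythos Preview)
Your argument is correct and runs parallel to the paper's: the same coordinate choice via Lemma~\ref{lmaci4}, the same reliance on Lemmas~\ref{lmadimsoclerank} and~\ref{lmagensofideal} applied to the currents $R^{f_I}\wedge R^Z_{p,\alpha}$, and the same key input that $\mathfrak{q}_I \subsetneq \mathfrak{m}$ at a singular point. The paper first derives the identity $R^w = C\sum_i \xi_{I,i}\,R^{w'}\wedge R^Z_{p,i}$ by combining Poincar\'e--Lelong on $\C^n$ and on $Z$, then multiplies by elements $g_i \in \bigcap_{j\neq i}\ann(R^{w'}\wedge R^Z_{p,j}) \setminus \ann(R^{w'}\wedge R^Z_{p,i})$ (whose existence is precisely what the two lemmas guarantee) to isolate each $\xi_{I,i}$. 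You instead test directly against socle elements $\chi$ and use that the map $\tilde\mu$ of Lemma~\ref{lmagensofideal} is an isomorphism onto $\C^N$. These are dual packagings of the same linear algebra, and yours has the mild advantage of not needing the paper's case split between $\rank E_p = 1$ and $\rank E_p > 1$.

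Two remarks. First, your flagged ``hard part'' is not hard: Poincar\'e--Lelong on $Z$ (\cite{CH}, Section~1.9, exactly as the paper invokes it) already gives $\mu^{f_I}\wedge dw_I\wedge [Z] = c\,[0]$, after which multiplying the point mass $[0]$ by the holomorphic function $\chi$ gives $\chi(0)\,[0]$; no tube-integral or branch-by-branch analysis is required. Second, your route through Hilbert--Samuel multiplicity to conclude $\mathfrak{q}_I \neq \mathfrak{m}$ is correct but heavier than necessary --- the paper simply observes that $\mathfrak{q}_I = \mathfrak{m}$ would mean $\mathfrak{m}_{Z,0}$ is generated by $d = \dim Z$ elements, contradicting $0 \in \Zsing$.
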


\begin{proof}
    As mentioned in the introduction of the section, the existence of $\xi_i$ is
    Example~1 in \cite{A2}, so we only need to prove that $\xi_i$ vanish at $Z_\sing$
    if $R^Z$ is defined with respect to a minimal free resolution.
    Assume that $0 \in Z_\sing$.
    We begin by choosing coordinates in $\Cn$ such that $\{ w_J = 0 \} \cap Z = \{ 0 \}$
    for all $J \subseteq \{ 1,\dots,n \}$ with $|J| = n-p$, which is possible by Lemma~\ref{lmaci4}.
    We have
    \begin{equation} \label{eqZrepr2}
        [Z] = \sum_{i,|I| = p} \xi_{I,i} dw_I \wedge R^Z_{p,i},
    \end{equation}
    where $\xi_{I,i}$ are holomorphic functions, and we are done if we can prove that $\xi_{I,i}(0) = 0$ for all $\xi_{I,i}$.

    Fix some $I \subseteq \{ 1,\dots,n \}$ with $|I| = p$. Let $w' = (w_{J_1},\dots,w_{J_{n-p}})$, where $J = I^c$.
    By the Poincar\'e-Lelong formula applied to $w'$ on $Z$, see \cite{CH}, Section~1.9, we have that
    \begin{equation*}
        \frac{1}{(2\pi i)^p} R^{w'} \wedge dw' \wedge [Z] = k[0]
    \end{equation*}
    for some $k \geq 1$. Combined with the Poincar\'e-Lelong formula applied to $w$ in $\Cn$,
    we get
    \begin{equation*}
        R^w \wedge dw = (2\pi i)^n [0] = ((2\pi i)^{n-p}/k) R^{w'}\wedge dw' \wedge [Z].
    \end{equation*}
    Since by \eqref{eqZrepr2}
    \begin{equation*}
        dw' \wedge [Z] = \pm \sum_i \xi_{I,i} dw \wedge R^Z_{p,i}
    \end{equation*}
    we get that
    \begin{equation} \label{eqmuw}
        R^w = C \sum_i \xi_{I,i} R^{w'} \wedge R^Z_{p,i}
    \end{equation}
    for some constant $C \neq 0$.

    We first consider the case when $R^Z$ consists of one single component $R^Z_p$.
    By Corollary~\ref{corranntensprod}, $\ann (R^{w'} \wedge R^Z_p) = \mathcal{J}(w') + \mathcal{J}_Z$.
    We claim that the inclusion $\mathcal{J}(w)_0 \supseteq ( \mathcal{J}(w') + \mathcal{J}_Z )_0$ is strict.
    If the inclusion is not strict, then $w'$ generates the maximal ideal $\mathfrak{m}_{Z,0}$ in $\Ok_{Z,0}$,
    which is a contradiction by Proposition~4.32 in \cite{Dem}, since the number of functions
    needed to generate the maximal ideal at a singular point must be strictly larger than the dimension.
    Thus there exists a $g$ in 
    \begin{equation*}
        \mathcal{J}(w)_0 \setminus ( \mathcal{J}(w') + \mathcal{J}_Z)_0 = (\ann R^w)_0 \setminus (\ann (R^{w'} \wedge R^Z_p))_0.
    \end{equation*}
    Multiplying \eqref{eqmuw} by $g$, we get that $g\xi_I \in \ann (R^{w'} \wedge R^Z_p)$, and hence we must have $\xi_I(0) = 0$.

    Now we consider the case when $R^Z_p$ consists of more than one component.
    By Corollary~\ref{corranntensprod}, the tensor product of the Koszul complex of $w'$ and the minimal free
    resolution of $\mathcal{J}_Z$ is a minimal free resolution of $\mathfrak{q} := \mathcal{J}(w') + \mathcal{J}_Z$,
    and the rank $N$ of its left-most non-zero module is equal to the rank of the left-most non-zero module
    in the free resolution of $\mathcal{J}_Z$ since the left-most non-zero module of the Koszul complex has rank $1$.
    By Corollary~\ref{corranntensprod}, we have
    \begin{equation} \label{eqqrepr}
        \mathfrak{q} = \cap_{i=1}^N \ann (R^{w'} \wedge R^Z_{p,i}).
    \end{equation}
    By Lemma~\ref{lmadimsoclerank}, $N = \dim_\C \Hom_\Ok(\Ok/\mathfrak{m},\Ok/\mathfrak{q})$
    and by Lemma~\ref{lmagensofideal}, if $\mathfrak{q} = \cap_{i=1}^m \ann \mu_i$, then $m \geq N$.
    Thus, if we remove one term $\ann (R^{w'}\wedge R^Z_{p,i})$ from the intersection in \eqref{eqqrepr},
    we get something strictly larger, i.e., for any $i$,
    \begin{equation} \label{eqexistannihilator}
        (\cap_{j\neq i} \ann (R^{w'}\wedge R^Z_{p,j}) ) \setminus (\ann R^{w'} \wedge R^Z_{p,i}) \neq \emptyset.
    \end{equation}
    We fix some $i = 1,\dots,n$, and take $g_i$ in \eqref{eqexistannihilator} and multiply \eqref{eqmuw} by $g_i$.
    Since $g_i \in \cap_{j \neq i} \ann (R^{w'} \wedge R^Z_{p,j})$, we must have $g_i \in \mathfrak{m}$,
    so $g_i R^w = 0$. Thus we get
    \begin{equation*}
        g_i \xi_{I,i} R^{w'} \wedge R^Z_{p,i} = 0.
    \end{equation*}
    Since $g_i \notin \ann (R^{w'} \wedge R^Z_{p,i})$ but $g_i\xi_{I,i} \in \ann (R^{w'}\wedge R^Z_{p,i})$,
    we must have $\xi_{I,i} \in \mathfrak{m}$, and we are done.
\end{proof}

\section{Proof of Proposition~\ref{propcounterex1}} \label{sectproofce1}

   By moving to a nearby germ $(Z,w)$, we can assume that $Z_\sing$ has pure codimension $k$,
   and that there exists a complete intersection $f = (f_1,\dots,f_q)$ on $(Z,w)$ such that
   $(Z_\sing,w) = \{ f_1 = \dots = f_k = 0 \} \cap (Z,w)$, see Lemma~\ref{lmaci3}.
   We let $\mathcal{I} = \mathcal{J}(f_1,\dots,f_q)_w$ and $V = Z(\mathcal{I})$, and since $q \geq k$, $V \subseteq Z_\sing$.
   Since $\mathcal{J}_{V,w}$ is finitely generated over $\Ok_{Z,w}$, we get from the Nullstellensatz that
   $\mathcal{J}_{V,w}^m \subseteq \mathcal{I}$ for $m$ sufficiently large.
   Now, we choose $m$ to be minimal such that this inclusion holds. Thus, there exists a function
   $g \in \mathcal{J}_{V,w}^{m-1} \setminus \mathcal{I}$, such that $g\mathcal{J}_{V,w} \subseteq \mathcal{I}$.
   Since $g \notin \mathcal{I}$, we are done if we can show that $g\mu^{f} \wedge [Z] = 0$.

   By Theorem~\ref{thmbmch}, we can replace $\mu^f$ by $R^f$, and instead show that $g R^f \wedge [Z] = 0$.
   By Theorem~\ref{thmxivanish}
   \begin{equation*}
       g R^f \wedge [Z] = g \sum \xi_i \wedge R^f \wedge R^p_i,
   \end{equation*}
   where $\xi_i$ are holomorphic $(p,0)$-forms vanishing on $Z_\sing$.
   Thus $\xi_i = \sum \xi_{I,i} dw_I$, where $\xi_{I,i}$ are holomorphic functions
   vanishing at $Z_\sing$. Since $g \mathcal{J}_{V,w} \subseteq \mathcal{I}$ and $\mathcal{J}_{Z_\sing,w} \subseteq \mathcal{J}_{V,w}$,
   we get that $g \xi_{I,i} \in \mathcal{I}$ in $\Ok_{Z,w}$. By Corollary~\ref{corranntensprod},
   $\ann R^f \wedge R^Z = \mathcal{I} + \mathcal{J}_{Z,w}$.
   Since if $g \xi_{I,i} \in \mathcal{I}$ in $\Ok_{Z,w}$, then $g \xi_{I,i} \in \mathcal{I} + \mathcal{J}_{Z,w}$ in $\Ok_{\Cn,w}$,
   we get that $g R^f \wedge [Z] = 0$.

\section{Singularity subvarieties and counterexamples in the non Cohen-Macaulay case} \label{sectproofce2}

We will recall the notion of singularity subvarieties of analytic sheaves from \cite{ST}.
Let $R$ be a local Noetherian ring and $M \neq 0$ a finitely generated $R$-module.
A \emph{regular $M$-sequence} in an ideal $I \subseteq R$ is a sequence $(f_1,\dots,f_p)$ in $I$ such that
$f_i$ is not a zero-divisor in $M/(f_1,\dots,f_{i-1})M$ for $i = 1,\dots,p$.
The \emph{depth} of an ideal $I$ on a module $M$, denoted $\depth_I M$ is the maximal length of a regular $M$-sequence
in $I$. By $\depth_R M$, we will denote the depth of the maximal ideal $\mathfrak{m}$ of $R$ on $M$. This is also called
the homological codimension of $M$.
The \emph{homological dimension} of $M$, denoted $\hd_R M$, is defined
as the minimal length of any free resolution of $M$.

A \emph{regular local ring} is a local ring $R$ such that the maximal ideal $\mathfrak{m}$ of $R$ is generated by
$n = \dim R$ elements, where $\dim R$ is the Krull-dimension of $R$, that is, the maximal length of
a strict chain of prime ideals in $R$.
In particular, if $Z$ is an analytic variety, then $\Ok_{Z,z}$ is a regular local ring if and only
if $z \in Z_\reg$, see Proposition~4.32 in \cite{Dem}.
The following is Theorem~19.9 in \cite{E}, the Auslander-Buchsbaum formula.

\begin{prop} \label{propsyzygy}
    If $R$ is a regular local ring, and $M$ is a finitely generated $R$-module, then
    $\hd_R M + \depth_R M = \dim R$.
\end{prop}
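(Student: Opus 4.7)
The statement is the Auslander--Buchsbaum formula specialized to regular local rings; in this setting every finitely generated $R$-module has finite homological dimension (bounded by $\dim R$), and $\depth_R R = \dim R$ since regular local rings are Cohen--Macaulay. My plan is to prove the identity by induction on $d := \dim R$. The base case $d = 0$ is immediate: $R$ is then a field, every $M$ is a free module, and both $\hd_R M$ and $\depth_R M$ vanish.

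For the inductive step I split into two cases according to $\depth_R M$. First, suppose $\depth_R M \geq 1$. By prime avoidance applied to $\mathfrak{m}$ and the (finitely many) associated primes of $R$ and of $M$ — none of which equals $\mathfrak{m}$, since both $\depth_R R$ and $\depth_R M$ are positive — I pick $x \in \mathfrak{m}$ which is a non-zerodivisor on both $R$ and $M$. Set $R' := R/xR$, which is regular local of dimension $d-1$ (using that $R$ is regular, so the maximal ideal is still minimally generated, dropping one generator), and $M' := M/xM$, with $\depth_{R'} M' = \depth_R M - 1$. The central technical input is that a minimal free resolution $F_\bullet \to M$ over $R$ tensored with $R'$ remains a minimal free resolution of $M'$ over $R'$: exactness follows because $\Tor_i^R(R/xR, M) = 0$ for $i \geq 1$ (computed from the length-one resolution $0 \to R \xrightarrow{x} R \to R/xR \to 0$ together with $x$ being a non-zerodivisor on $M$), and minimality is preserved because the differentials have entries in $\mathfrak{m}$. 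Hence $\hd_{R'} M' = \hd_R M$, and applying the induction hypothesis to $M'$ over $R'$ yields $\hd_R M + \depth_R M = \dim R$.

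Next, suppose $\depth_R M = 0$. Since $d \geq 1$, the module $M$ cannot be free, so $\hd_R M \geq 1$ and there is a short exact sequence $0 \to K \to F_0 \to M \to 0$ with $K$ the first syzygy in a minimal resolution, so that $\hd_R K = \hd_R M - 1$. Applying the depth lemma twice, using $\depth_R F_0 = d \geq 1$, I get $\depth_R K \geq \min(d, 1) = 1$ and $0 = \depth_R M \geq \min(\depth_R K - 1, d)$, which together force $\depth_R K = 1$. The first case applied to $K$ now yields $\hd_R K + \depth_R K = d$, and since $\hd_R M = \hd_R K + 1$ and $\depth_R M = 0$, the formula for $M$ follows.

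The main obstacle is establishing the invariance $\hd_{R'}(M/xM) = \hd_R M$ under reduction modulo a regular element $x$ on both $R$ and $M$; this is where minimality of the chosen free resolution plays the decisive role, and where one has to genuinely use that $R$ is regular (so that $R'$ is also regular and the inductive hypothesis applies). The remaining ingredients — prime avoidance, the depth lemma, and the vanishing of $\Tor$ against $R/xR$ — are standard but must be combined carefully so that depth drops by exactly one in the reduction step and is pinned down exactly (not merely bounded) in the syzygy step.
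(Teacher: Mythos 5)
The paper offers no proof of this proposition at all --- it simply cites Theorem~19.9 of \cite{E} (the Auslander--Buchsbaum formula) --- so your argument is necessarily a different route: it is the standard textbook induction on $\dim R$, reducing modulo a regular element in the positive-depth case and passing to the first syzygy in the depth-zero case. The overall structure is sound: the $\Tor$-vanishing argument correctly shows that a minimal free resolution of $M$ tensors down to a minimal free resolution of $M/xM$ over $R/xR$, the depth drops by exactly one, and the two applications of the depth lemma in the $\depth_R M = 0$ case do pin down $\depth_R K = 1$ exactly, which is what lets you fall back on the first case. Finiteness of $\hd_R M$ also comes out of the induction rather than being assumed, which is clean.

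There is one step that fails as written. In the positive-depth case you choose $x \in \mathfrak{m}$ by prime avoidance so that $x$ is a non-zerodivisor on $R$ and on $M$, and then assert that $R' = R/xR$ is again regular local of dimension $d-1$. That assertion requires $x \notin \mathfrak{m}^2$ (i.e., $x$ must be part of a minimal generating set of $\mathfrak{m}$), and avoiding the associated primes of $R$ and $M$ does not guarantee this: in $R = k[[t]]$ the element $x = t^2$ is a non-zerodivisor, yet $R/xR = k[t]/(t^2)$ is not regular, so the induction hypothesis could not be applied to $R'$. Since the whole inductive step hinges on $R'$ being regular, this must be repaired. The fix is standard: apply the version of prime avoidance that tolerates one non-prime ideal, choosing $x \in \mathfrak{m}$ outside $\mathfrak{m}^2$ and outside the finitely many associated primes of $M$ (avoiding $\ann$-primes of $R$ is automatic, as a regular local ring is a domain); this is possible because $\mathfrak{m} \neq \mathfrak{m}^2$ when $d \geq 1$ and $\mathfrak{m}$ is not an associated prime of $M$ when $\depth_R M \geq 1$. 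With that adjustment the proof is complete and correct.
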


Let $\mathcal{F}$ be a coherent analytic sheaf on $\Omega \subseteq \Cn$,
and let $\Ok_z$ denote the ring of germs of holomorphic functions at $z$ in $\Omega$.
The \emph{singularity subvarieties}, $S_m$, of $\mathcal{F}$ are defined by
\begin{equation*}
    S_m(\mathcal{F}) = \{ z \in \Omega ; \depth_{\Ok_z} \mathcal{F}_z \leq m\},
\end{equation*}
where we use the convention that $\depth_R M = \infty$ if $M = 0$, so that $S_m \subseteq \supp \mathcal{F}$.
We will use the following alternative definition of the sets $Z_k$ associated with
an analytic sheaf above:
\begin{equation} \label{eqzkalternative}
    Z_k(\mathcal{F}) = \{ z \in \Omega ; \hd_{\Ok_z} \mathcal{F}_z \geq k \}
\end{equation}
(in the introduction, we defined the sets $Z_k$ if $\mathcal{F}$ was of the form $\Ok/\mathcal{J}$,
where $\mathcal{J}$ was an coherent ideal sheaf, but the same definition works for any coherent
analytic sheaf).
To see this, note first that if $\rank \varphi_k(z)$ is constant in a neighborhood of some
$z_0 \in \Omega$ (i.e., $z_0 \notin Z_k$), then $\Ok(E_{k-1})/\Im \varphi_k$ is free in a neighborhood of $z_0$,
so $\Ok/\mathcal{J}$ has a free resolution of length $k-1$.
Conversely, by the uniqueness of minimal free resolutions, $\rank \varphi_k(z)$
must be constant in a neighborhood of $z$ if $k > \hd_{\Ok_z} \mathcal{F}_z$.

\begin{prop} \label{propzksk}
    If $\mathcal{F}$ is coherent analytic sheaf on some open set in $\Cn$, we have $S_k(\mathcal{F}) = Z_{n-k}(\mathcal{F})$.
\end{prop}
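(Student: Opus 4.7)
The plan is essentially to invoke the Auslander--Buchsbaum formula already stated as Proposition~\ref{propsyzygy}. First I would observe that for every point $z$ in the ambient open set, the local ring $\Ok_z = \Ok_{\Cn,z}$ is regular of Krull dimension $n$: its maximal ideal is generated by the $n$ coordinate functions centered at $z$, which by Proposition~4.32 in \cite{Dem} is the minimal number of generators. Thus Proposition~\ref{propsyzygy} applies to every nonzero finitely generated $\Ok_z$-module.

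Next, fix a point $z$ with $\mathcal F_z \neq 0$. Applying Proposition~\ref{propsyzygy} to the finitely generated $\Ok_z$-module $M = \mathcal F_z$ gives
\[
\hd_{\Ok_z} \mathcal F_z + \depth_{\Ok_z} \mathcal F_z = n.
\]
Rearranging, $\depth_{\Ok_z} \mathcal F_z \leq k$ if and only if $\hd_{\Ok_z} \mathcal F_z \geq n-k$. By the definition of $S_k(\mathcal F)$ and the alternative characterization \eqref{eqzkalternative} of $Z_{n-k}(\mathcal F)$, this says precisely that $z \in S_k(\mathcal F) \iff z \in Z_{n-k}(\mathcal F)$.

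For the remaining case $\mathcal F_z = 0$, coherence forces $\mathcal F$ to vanish in a whole neighborhood of $z$, so a minimal free resolution near $z$ may be taken to be the zero complex, giving $\hd_{\Ok_z} \mathcal F_z = 0$, while $\depth_{\Ok_z} \mathcal F_z = \infty$ by the stated convention. Hence such $z$ lies in neither $S_k(\mathcal F)$ nor $Z_{n-k}(\mathcal F)$ for the relevant range of $k$, and the proposed equality holds at these points as well.

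The main step in this plan is really only invoking the Auslander--Buchsbaum formula, and no serious obstacle is expected; the only minor care needed is to confirm that the alternative description \eqref{eqzkalternative} of the singularity sets $Z_k$ really does coincide with the one via the non-maximal-rank loci of the differentials $f_k$ in a minimal free resolution, which follows, as the paper has already pointed out, from uniqueness of minimal free resolutions over a local Noetherian ring together with the fact that $\Ok(E_{k-1})/\Im f_k$ is free near $z$ precisely when $\rank f_k$ is locally constant there.
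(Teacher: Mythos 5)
Your proposal is correct and follows exactly the paper's (one-line) proof: apply the Auslander--Buchsbaum formula of Proposition~\ref{propsyzygy} at each stalk and translate via \eqref{eqzkalternative}; you have merely written out the rearrangement and the trivial case $\mathcal F_z=0$ explicitly. No issues.
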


\begin{proof}
    This follows from Proposition~\ref{propsyzygy} and \eqref{eqzkalternative}.
\end{proof}

Let $\Omega \subseteq \Cn$ be an open set, $A$ a subvariety of $\Omega$ with ideal sheaf $\mathcal{J}_A$,
and $\mathcal{F}$ a coherent analytic sheaf in $\Omega$. For $z \in \Omega$, we define
\begin{equation*}
    \depth_{A,z} \mathcal{F} = \left\{ \begin{array}{cc} \infty & \text{ if } \mathcal{F}_z = 0 \\ \depth_{\mathcal{J}_{A,z}} \mathcal{F} & \text{ otherwise} \end{array} \right. .
\end{equation*}
and
\begin{equation*}
    \depth_A \mathcal{F} = \inf_{z \in A} \depth_{A,z} \mathcal{F}
\end{equation*}

The following is (part of) Theorem~1.14 in \cite{ST}.

\begin{thm} \label{thmprofsk}
    Let $\Omega \subseteq \Cn$ be some open set, $A$ a subvariety of $\Omega$, and $\mathcal{F}$ a coherent analytic sheaf in $\Omega$.
    Then for $q \geq 1$, we have $\depth_A \mathcal{F} \geq q$ if and only if
    $\dim A\cap S_{k + q}(\mathcal{F}) \leq k$ for all $k$.
\end{thm}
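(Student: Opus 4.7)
The plan is to prove both directions by induction on $q$, using the following reduction lemma as the key tool: if $f \in \Gamma(U,\mathcal{J}_A)$ is a non-zero-divisor on $\mathcal{F}_w$ for every $w$ in a neighborhood $U$ of some $z \in A$ with $\mathcal{F}_z \neq 0$, then $\depth_{A,w}(\mathcal{F}/f\mathcal{F}) = \depth_{A,w}\mathcal{F}-1$ and $S_{m-1}(\mathcal{F}/f\mathcal{F}) = S_m(\mathcal{F}) \cap V(f)$ on $\supp(\mathcal{F}/f\mathcal{F})$. Both identities are standard consequences of the fact that adjoining a non-zero-divisor at the start of a regular sequence lengthens it by one, and conversely every regular sequence on $\mathcal{F}/f\mathcal{F}$ lifts. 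A crucial point is that since $f \in \mathcal{J}_A$, we have $A \subseteq V(f)$ automatically, so the extra factor $V(f)$ on the right can be dropped whenever we intersect with $A$.

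For the forward direction, the case $q=0$ is trivial. For $q \geq 1$, the assumption $\depth_A\mathcal{F}\geq 1$ means that at each $z \in A$ with $\mathcal{F}_z \neq 0$, no associated prime of $\mathcal{F}_z$ contains $\mathcal{J}_{A,z}$. By sheaf-theoretic prime avoidance (combined with coherence of $\mathcal{J}_A$ and the fact that $\mathcal{F}_w$ has finitely many associated primes at each nearby $w$), one finds $f \in \Gamma(U,\mathcal{J}_A)$ on a neighborhood $U$ of $z$ which is a non-zero-divisor on $\mathcal{F}_w$ for all $w \in U$. Then $\depth_A(\mathcal{F}/f\mathcal{F}) \geq q-1$, induction gives $\dim A \cap S_{k+q-1}(\mathcal{F}/f\mathcal{F}) \leq k$, and the reduction lemma together with $A\subseteq V(f)$ translates this to $\dim A\cap S_{k+q}(\mathcal{F}) \leq k$.

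For the converse, the case $q=1$ is the heart of the matter: assuming the dimension bound, I must show $\mathcal{J}_{A,z}$ contains a non-zero-divisor on $\mathcal{F}_z$ at each relevant $z$. If it does not, $\mathcal{J}_{A,z}$ lies in some associated prime $\mathfrak{p}$ of $\mathcal{F}_z$, so the irreducible analytic set $V(\mathfrak{p})$ is contained in $A$ near $z$. At a smooth point $w$ of $V(\mathfrak{p})$ at which no other component of $\supp\mathcal{F}$ passes, $\mathfrak{p}\Ok_w$ is an associated prime of $\mathcal{F}_w$ of height $\codim V(\mathfrak{p})$, and the standard inequality $\depth_{\Ok_w}\mathcal{F}_w \leq \dim \Ok_w/\mathfrak{p}\Ok_w = \dim V(\mathfrak{p})$ forces an open dense part of $V(\mathfrak{p})$ to lie in $A\cap S_{\dim V(\mathfrak{p})}(\mathcal{F})$, contradicting the hypothesis applied with $k = \dim V(\mathfrak{p}) - 1$. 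For $q \geq 2$, the $q=1$ case yields a non-zero-divisor $f \in \mathcal{J}_A$, the reduction lemma shows that $\mathcal{F}/f\mathcal{F}$ satisfies the same dimension hypothesis with $q$ replaced by $q-1$, and induction gives $\depth_A(\mathcal{F}/f\mathcal{F}) \geq q-1$, hence $\depth_A\mathcal{F} \geq q$.

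The step I expect to be the main obstacle is the converse at $q=1$: translating the purely dimensional bound on $A \cap S_{k+1}(\mathcal{F})$ into the algebraic statement that no associated prime of $\mathcal{F}_z$ contains $\mathcal{J}_{A,z}$ requires the careful passage from a component $V(\mathfrak{p})$ contained in $A$ to a generic analytic point of $V(\mathfrak{p})$ where the depth inequality $\depth_{\Ok_w}\mathcal{F}_w \leq \dim V(\mathfrak{p})$ can be invoked; this is exactly where the regularity of $\Ok_w$ and Proposition~\ref{propsyzygy}/Proposition~\ref{propzksk} enter.
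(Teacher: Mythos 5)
The paper does not prove this statement at all---it is quoted as (part of) Theorem~1.14 of Siu--Trautmann \cite{ST}---so there is nothing internal to compare against; what you have written is, in essence, the classical proof of that theorem (due to Scheja), and the skeleton is sound: reduce modulo a non-zero-divisor $f\in\mathcal{J}_A$, use $\depth_{\mathcal{J}_{A,z}}(\mathcal{F}_z/f\mathcal{F}_z)=\depth_{\mathcal{J}_{A,z}}\mathcal{F}_z-1$ and $S_{m-1}(\mathcal{F}/f\mathcal{F})=S_m(\mathcal{F})\cap V(f)$ on the support, note $A\subseteq V(f)$, and induct on $q$; for the converse at $q=1$, contradict the dimension bound at a generic point of $V(\mathfrak{p})$ for an associated prime $\mathfrak{p}\supseteq\mathcal{J}_{A,z}$ via $\depth_{\Ok_w}\mathcal{F}_w\leq\dim\Ok_w/\mathfrak{p}\Ok_w$. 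Two points are stated more casually than they deserve. First, your ``trivial'' base case $q=0$ is exactly the Scheja bound $\dim S_m(\mathcal{F})\leq m$, which is not trivial; it is, however, available in the paper's framework from Proposition~\ref{propsyzygy} and Proposition~\ref{propzksk} together with the Buchsbaum--Eisenbud fact $\codim Z_k(\mathcal{F})\geq k$ (the \cite{E}, Corollary~20.12 type statement already invoked in the introduction), so you should cite it rather than wave at it. Second, the passage from ``$\mathfrak{p}$ is an associated prime of $\mathcal{F}_z$'' to ``$\mathcal{J}_{V(\mathfrak{p}),w}$ is an associated prime of $\mathcal{F}_w$ for $w$ in a dense subset of $V(\mathfrak{p})$'' requires the primary decomposition theory for coherent sheaves (the annihilator of a generator $m$ with $\ann m=\mathfrak{p}$ defines a coherent subsheaf $\Ok m\cong\Ok/\ann m$ whose support is $V(\mathfrak{p})$, and genericity of primeness of $\ann_w m$ along $V(\mathfrak{p})$ must be justified); you correctly flag this as the delicate step, and it is precisely the machinery developed in \cite{ST}. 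With those two inputs supplied, your argument is correct; the propagation of ``non-zero-divisor at $z$'' to a neighborhood via coherence of $\ker(f\colon\mathcal{F}\to\mathcal{F})$, and the prime-avoidance step, are both fine.
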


In particular, if we let $Z$ be an analytic subvariety of $\Omega$, $\mathcal{F} = \Ok_Z$, and $A = Z^1$, 
where the sets $Z^k$ associated with $Z$ are defined as in \eqref{eqzkintrinsic}, we get the following.

\begin{cor} \label{corzkregseq}
    For $q \geq 1$, we have $\depth_{Z^1} \Ok_Z \geq q$ if and only if 
    $\codim Z^k \geq q + k$ in $Z$ for all $k \geq 1$
\end{cor}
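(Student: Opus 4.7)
The plan is to deduce the corollary directly from Theorem~\ref{thmprofsk} and Proposition~\ref{propzksk} by a careful index translation, taking $A = Z^1$ and $\mathcal{F} = \Ok_Z$. Theorem~\ref{thmprofsk} asserts that $\depth_{Z^1} \Ok_Z \geq q$ is equivalent to the family of dimension bounds $\dim (Z^1 \cap S_{k+q}(\Ok_Z)) \leq k$ for all $k \geq 0$, so the task reduces to rewriting $S_{k+q}(\Ok_Z)$ in terms of the intrinsic subvarieties $Z^j$ and checking that the resulting system of conditions is the one in the statement.

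First I would use Proposition~\ref{propzksk} to write $S_{k+q}(\Ok_Z) = Z_{n-k-q}(\Ok_Z)$, and then invoke the definition $Z^j = Z_{p+j}$ for $j \geq 1$ (from \eqref{eqzkintrinsic}) with $p = \codim Z = n - d$. Setting $j := d - k - q$ this identifies $S_{k+q}(\Ok_Z)$ with $Z^j$ whenever $j \geq 1$. Since the $Z^j$ form a decreasing sequence $Z^1 \supseteq Z^2 \supseteq \cdots$, the intersection $Z^1 \cap Z^j$ collapses to $Z^j$, and the condition $\dim(Z^1 \cap S_{k+q}(\Ok_Z)) \leq k$ becomes $\dim Z^j \leq d - j - q$, i.e., $\codim_Z Z^j \geq j + q$. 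As $k$ ranges over $0, 1, \dots, d-q-1$, the index $j = d-k-q$ ranges over $1, 2, \dots, d-q$, which reproduces the condition $\codim Z^j \geq j + q$ in $Z$ for all $j$ in this range.

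It then remains to handle the boundary cases $k \geq d - q$, where $n - k - q \leq p$, so that $S_{k+q}(\Ok_Z) \supseteq Z$ and the intersection with $Z^1$ is just $Z^1$ itself. The corresponding conditions $\dim Z^1 \leq k$ for $k \geq d - q$ are implied by the $j = 1$ condition $\codim_Z Z^1 \geq q + 1$ already obtained from the main range, and conversely the condition for $j > d - q$ is vacuous because $\codim_Z Z^j \geq j + q > d$ forces $Z^j = \emptyset$. The anticipated obstacle is purely bookkeeping: making sure that the ambient codimensions arising from Proposition~\ref{propzksk} are translated to codimensions \emph{in $Z$} consistently with the convention set up before Theorem~\ref{annmuf}, and that the edge values of $k$ contribute no extra constraints beyond those coming from $j \geq 1$.
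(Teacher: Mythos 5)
Your proposal is correct and follows essentially the same route as the paper: apply Theorem~\ref{thmprofsk} with $A = Z^1$ and $\mathcal{F} = \Ok_Z$, convert $S_{k+q}$ to $Z_{n-k-q}$ via Proposition~\ref{propzksk}, and then translate indices using $Z^j = Z_{p+j}$ and the nesting $Z_k \subseteq Z_{p+1} = Z^1$. The only cosmetic difference is that you carry out the bookkeeping in terms of $\codim_Z Z^j$ directly rather than ambient codimensions $\codim Z_{k'}$, which if anything makes the edge cases slightly more transparent.
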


\begin{proof}
    If we apply Theorem~\ref{thmprofsk} to $A = Z^1$ and $\mathcal{F} = \Ok_Z$, then we
    only need to prove that $\codim Z^k \geq q + k$ for $k \geq 1$ is equivalent to $\dim Z^1 \cap S_{k + q}(\Ok_Z) \leq k$.
    We can write the last condition as $\dim (Z^1 \cap Z_{n-k-q}) \leq k$ by Proposition~\ref{propzksk}.
    If we replace $\dim V$ by $n - \codim V$ and set $k' = n-k-q$, we get $\codim (Z^1 \cap Z_{k'}) \geq q + k'$.
    Since $Z_k = Z$ for $k \leq p$, where $p = \codim Z$, and $Z^1 = Z_{p+1}$, this condition for $k \leq p$ is equivalent to
    $\codim Z_{p+1} \geq p + q + 1$ (in $\Omega$), and since $Z_k \subseteq Z_{p+1} = Z^1$ for $k > p + 1$, this is
    equivalent to $\codim Z_{p + k} \geq p + q + k$ for $k \geq 2$.
\end{proof}

In $\Cn$, it is a standard result that a tuple $f = (f_1,\dots,f_p)$ of holomorphic functions is a 
complete intersection if and only if it is a regular sequence (see for example \cite{dJP}, Corollary 4.1.20).
However, Corollary~\ref{corzkregseq} says that this is not always the case on a singular variety.
We will illustrate this with an example.

\begin{ex}
    Let $\pi(t_1,t_2) = (t_1,t_1t_2,t_2^2,t_2^3)$, and let $Z = \pi(\C^2)$. Then $Z_\sing = \{ 0 \}$,
    because outside of $\{ t_1 = t_2 = 0 \}$, one can construct a holomorphic inverse to $\pi$,
    and we will see that $Z$ is not normal at $0$, so $0 \in Z_\sing$.
    The function $f$ such that $\pi^* f = t_2$ is weakly holomorphic on $Z$, since when $t_1 \neq 0$,
    $f = z_2/z_1$, and when $t_2 \neq 0$, $f = z_4/z_3$, so that $f \in \Ok(Z_\reg)$, and it is clear
    that $f$ is locally bounded near $Z_\sing = \{ 0 \}$.
    However, $f$ is not strongly holomorphic at $0$, because if $f = h$ on $Z$ in a neighborhood
    of $0$, where $h$ is holomorphic in a neighborhood of $0$ in $\C^4$, then by taking pull-back
    by $\pi$ to $\C^2$, we get
    \begin{equation*}
        t_2 = h(t_1,t_1t_2,t_2^2,t_2^3),
    \end{equation*}
    which can be seen to be impossible by a Taylor expansion of $h$ at $0$.

    Since $Z$ has pure dimension, $\codim Z^k \geq k + 1$ for $k \geq 1$ by \cite{E},
    Corollary~20.14b. Hence, $Z^k = \emptyset$ for $k \geq 2$.
    Since $Z$ is not normal, it does not satisfy the condition
    \begin{equation} \label{eqnormcond}
        \codim Z^k \geq k + 2, \quad k \geq 0
    \end{equation}
    for normality (see the introduction). However, since $Z^0 = Z_\sing = \{ 0 \}$, the condition \eqref{eqnormcond}
    is satisfied for all $k \neq 1$. Thus, since $Z^1 \subseteq Z_\sing$, and $\codim Z^1 \not\geq 3$,
    we must have $Z^1 = \{ 0 \}$. 
    By Corollary~\ref{corzkregseq}, there does not exist a regular $\Ok_Z$-sequence $f = (f_1,f_2)$ in $\mathcal{J}_{Z^1}$,
    since any such sequence has length $\leq 1$.
    In particular, if we take $f = (z_1,z_3)$, then $f$ is a complete intersection
    since $Z \cap \{ z_1 = z_3 = 0 \} = \{ 0 \}$, but $f$ is not a regular sequence.
    We claim that one can also see this more directly.
    To begin with, it is clear that $z_3 \notin (z_1)$ in $\Ok_Z$ since $Z \cap \{ z_1 = 0 \} \not\subseteq Z \cap \{ z_3 = 0 \}$.
    We also have that $z_2 \notin (z_1)$ in $\Ok_Z$, since if $z_2 \in (z_1)$, then by taking pull-back to
    $\C^2$ as above, we get
    \begin{equation*}
        t_1 t_2 = t_1 h(t_1,t_1t_2,t_2^2,t_2^3),
    \end{equation*}
    which is easily seen to be impossible.
    However, since $z_2 z_3 = z_1 z_4$ in $\Ok_Z$, we get that $ z_2 z_3 \in (z_1)$ in 
    $\Ok_Z$. Thus, $z_3$ is a zero-divisor in $\Ok_Z/(z_1)$, i.e., $(z_1,z_3)$ is not
    a regular $\Ok_Z$-sequence in $\mathcal{J}_{Z^1}$. 
\end{ex}

\begin{lma} \label{lmaannregseq}
    Let $f = (f_1,\dots,f_k)$ be a complete intersection on $(Z,z)$. If
    \begin{equation*}
        \ann\left( \dbar\frac{1}{f_r}\wedge \dots \wedge \dbar \frac{1}{f_1}\right) = \mathcal{J}(f_1,\dots,f_r) \text{ for all $r<k$},
    \end{equation*}
    then $(f_1,\dots,f_k)$ is a regular $\Ok_{Z,z}$-sequence.
\end{lma}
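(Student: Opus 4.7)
The plan is to proceed by induction on $r$, proving that $(f_1, \dots, f_r)$ is a regular $\Ok_{Z,z}$-sequence for each $r = 1, \dots, k$. For the base case $r = 1$, since $(Z,z)$ has pure dimension and $f$ defines a complete intersection, $f_1$ does not vanish on any irreducible component of $(Z,z)$; hence $f_1$ avoids every minimal prime of the reduced ring $\Ok_{Z,z}$, and so is a non-zero-divisor.

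For the inductive step, fix $r$ with $1 \leq r < k$, assume $(f_1, \dots, f_r)$ is regular, and take $g \in \Ok_{Z,z}$ with $g f_{r+1} \in \mathcal{J}(f_1, \dots, f_r)$; I want to conclude $g \in \mathcal{J}(f_1, \dots, f_r)$. The hypothesis of the lemma at level $r$ gives $\mathcal{J}(f_1, \dots, f_r) = \ann \mu^{f_1,\dots,f_r}$, so
\begin{equation*}
    f_{r+1} \cdot \bigl(g\, \mu^{f_1,\dots,f_r}\bigr) = 0.
\end{equation*}
Set $T := g\,\mu^{f_1,\dots,f_r}$, a pseudomeromorphic current on $(Z,z)$ of bidegree $(0,r)$ supported in $\{f_1 = \cdots = f_r = 0\}$. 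Since $f_{r+1} T = 0$, dividing by $f_{r+1}$ on $\{f_{r+1}\ne 0\}$ shows that $\supp T \subseteq \{f_1 = \cdots = f_{r+1} = 0\}$, a set of codimension $\geq r + 1$ in $Z$ by the complete intersection assumption on $f$.

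Applying the dimension principle (Proposition~\ref{proppm0}) to the pseudomeromorphic $(0,r)$-current $T$ then forces $T = 0$, so $g \in \ann \mu^{f_1,\dots,f_r} = \mathcal{J}(f_1, \dots, f_r)$, completing the induction. The argument is short and its only delicate point is the invocation of Proposition~\ref{proppm0} for currents living intrinsically on the singular germ $(Z,z)$; should one prefer a strictly ambient-space formulation, the same conclusion follows by working with $T \wedge [Z]$, a pseudomeromorphic $(q,q+r)$-current on $\Cn$ (with $q = \codim_{\Cn} Z$) whose support has codimension $\geq q + r + 1$, and applying the dimension principle in $\Cn$.
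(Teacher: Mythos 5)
Your proof is correct and follows essentially the same route as the paper: induction on the length of the sequence, the observation that $g\,\mu^{f_1,\dots,f_r}$ is a pseudomeromorphic $(0,r)$-current supported in $\{f_1=\dots=f_{r+1}=0\}$, and the dimension principle (Proposition~\ref{proppm0}) combined with the duality hypothesis at level $r$. The only cosmetic difference is that the paper phrases the step as a proof by contradiction and invokes Theorem~\ref{thmanninclusion} to localize the support, whereas you divide by $f_{r+1}$ directly; these are the same argument.
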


\begin{proof}
    If $k = 1$, this is clear since $\Ok_{Z,z}$ is reduced and $f$ is assumed to be a complete intersection.
    By induction over $k$, we can assume that $(f_1,\dots,f_{k-1})$ is a regular $\Ok_{Z,z}$-sequence.
    Assume that $(f_1,\dots,f_k)$ is not a regular sequence in $\Ok_{Z,z}$. Then,
    since $f_k \notin \mathcal{J}(f_1,\dots,f_{k-1})$, there exist $g \notin \mathcal{J}(f_1,\dots,f_{k-1})$
    such that $f_k g \in \mathcal{J}(f_1,\dots,f_{k-1})$. But since $g \in \mathcal{J}(f_1,\dots,f_{k-1})$ outside
    of $\{ f_k = 0 \}$, we get that
    \begin{equation*}
        \supp \left( g\dbar\frac{1}{f_{k-1}}\wedge \dots \wedge \dbar\frac{1}{f_1} \right) \subseteq \{ f_1 = \dots = f_k = 0 \}
    \end{equation*}
    by Theorem~\ref{thmanninclusion}. But then by Proposition~\ref{proppm0}, we get that
    \begin{equation*}
        g \in \ann \dbar \frac{1}{f_{k-1}}\wedge \dots \wedge \dbar \frac{1}{f_1} = \mathcal{J}(f_1,\ldots,f_{k-1}),
    \end{equation*}
    which is a contradiction.
\end{proof}

\begin{proof}[Proof of Proposition~\ref{propcounterex2}]
    By Lemma~\ref{lmaci2}, there exists a complete intersection $(f_1,\dots,f_{p+1})$ such that
    $Z^1 \subseteq \{ f_1 = \dots = f_{p+1} = 0 \}$.
    By Corollary~\ref{corzkregseq}, $(f_1,\dots,f_{p+1})$ is not a regular $\Ok_{Z,z}$-sequence in $\mathcal{J}(f_1,\dots,f_{p+1})_z$.
    Thus by Lemma~\ref{lmaannregseq}, we must have that
    \begin{equation} \label{eqannstrictinclusion}
        \ann \left( \dbar\frac{1}{f_k}\wedge \dots \wedge \dbar\frac{1}{f_1}\right) \supsetneq \mathcal{J}(f_1,\dots,f_k)
    \end{equation}
    for some $k \leq p$. However, by Theorem~\ref{annmuf}, we have equality for $k \leq {p-1}$.
    Thus we must have strict inclusion in \eqref{eqannstrictinclusion} for $k = p$.
\end{proof}

\section*{Acknowledgments}

I would like to thank my advisor Mats Andersson for valuable discussions
in the preparation of this article.

\end{document}